\documentclass[]{ensmath}

\EnsMath pages 1-2

\usepackage{mathrsfs}
\usepackage{enumerate}
\usepackage{ulem}
\usepackage{fontenc}
\usepackage[all]{xy}
\usepackage{array}

\def\CC {{\mathbb C}}     
\def\DD {{\mathbb D}}     
\def\EE {{\mathbb E}}     
\def\HH {{\mathbb H}}     
\def\NN {{\mathbb N}}     
\def\PP {{\mathbb P}}     
\def\QQ {{\mathbb Q}}     
\def\RR {{\mathbb R}}     
\def\TT {{\mathbb T}}     
\def\ZZ {{\mathbb Z}}     

\def\lo  {\longmapsto}

\def\lw  {\longrightarrow}
\def\mc {\mathcal}

\def\ol  {\overline}
\def\pl  {\partial}

\def\rw  {\rightarrow}
\def\tst {\Longleftrightarrow}

\def\wt  {\widetilde}
\def\wh  {\widehat}

\newcommand{\abs}[1]{\left\vert#1\right\vert}

\newcommand{\sco}[1]{\left(#1\right)}
\newcommand{\bsco}[1]{\left\lbrace#1\right\rbrace}

\title[TRIANGLE GROUPS AND TORUS KNOTS]{TRIANGLE GROUPS, AUTOMORPHIC FORMS, AND TORUS KNOTS}

\author[V. V. TSANOV]{Valdemar V. \sn{Tsanov}}

\begin{document}

\maketitle

\begin{abstract} This article is concerned with the relation between several classical and well-known objects: triangle Fuchsian groups, $\mathbb{C}^\times$-equivariant singularities of plane curves, torus knot complements in the 3-sphere. The prototypical example is the modular group $PSL_2(\ZZ)$: the quotient of the nonzero tangent bundle on the upper-half plane by the action of $PSL_2(\ZZ)$ is biholomorphic to the complement of the plane curve $z^3-27w^2=0$. This can be shown using the fact that the algebra of modular forms is doubly generated, by $g_2,g_3$, and the cusp form $\Delta=g_2^3-27g_3^2$ does not vanish on the half-plane. As a byproduct, one finds a diffeomorphism between $PSL_2(\RR)/PSL_2(\ZZ)$ and the complement of the trefoil knot - the local knot of the singular curve. This construction is generalized to include all $(p,q,\infty)$-triangle groups and, respectively, curves of the form $z^q+w^p=0$ and $(p,q)$-torus knots, for $p,q$ co-prime. The general case requires the use of automorphic 
forms on 
the simply connected group $\wt{SL_2}(\RR)$. The proof uses ideas of Milnor and Dolgachev, which they introduced in their studies of the spectra of the algebras of automorphic forms of cocompact triangle groups (and, more generally, uniform lattices). It turns out that the same approach, with some modifications, allows to handle the cuspidal case.
\end{abstract}

\tableofcontents

\section{Introduction}

The motivation for this study came from the author's desire to understand in detail a simple example of a geometric structure on a 3-manifold in the sense of Thurston. The 3-manifold in question is the complement of a torus knot in the 3-sphere and the geometric structure is modelled on $\wt{SL_2}(\RR)$. The existence of such a geometric structure is known from the work of Raymond and Vasquez, \cite{RayVas}. The novelty presented here is an explicit construction using automorphic forms, which, to the author's knowledge, has previously appeared in the literature only for the trefoil knot. This construction has complex analytic flavour, and the result concerning 3-manifolds comes as a byproduct.

Let $p,q$ be a pair of co-prime positive integers and $K_{p,q}$ denote the corresponding torus knot in ${\mathbb S}^3$. Let $\wt{SL_2}(\RR)$ denote the universal covering group of the Lie group $SL_2(\RR)$. In \cite{RayVas}, Raymond and Vasquez have shown that ${\mathbb S}^3\setminus K_{p,q}$ is diffeomorphic to a coset of $\wt{SL_2}(\RR)$ with respect to a suitable discrete subgroup. This result is obtained as a part of a general topological classification of 3-manifolds covered by Lie groups, based on the theory of Seifert fibrations. It turns out that the only nontrivial knot obtained as a coset of the simple group $PSL_2(\RR)$ is the trefoil knot, and the corresponding discrete group is none other than the modular group, so we have ${\mathbb S}^3\setminus K_{2,3}\cong PSL_2(\RR)/PSL_2(\ZZ)$. The latter curious fact has an interesting analytic proof due to Quillen, see \cite{MilnorKTheory} \S 10. Here is a sketch of Quillen's argument.

Let $\HH^2$ denote the upper half-plane with the Poincar${\rm\acute e}$ metric. Recall that the algebra of modular forms for $\Gamma=PSL_2(\ZZ)$ is generated by two elements, often denoted $g_2,g_3$. The modular form $\Delta=g_2^3-27g_3^2$ is the cusp form of lowest positive degree, vanishes with order 1 at $\infty$ and doesn't vanish in the upper half plane $\HH^2$. Let $T'\HH^2$ denote the tangent bundle of $\HH^2$ with removed zero section and let $U\HH^2$ denote the unit tangent bundle. Classically, modular forms are defined as functions on $\HH^2$ with some specific behavior under the action of $\Gamma$. However, it is well known that modular forms can be regarded as specific functions on $T'\HH^2$ invariant on the orbits of (the tangent action of) $\Gamma$. Now define a map $\Psi:T'\HH^2\rw \CC^2$ sending $v$ to $(g_2(v),g_3(v))$. Since the modular forms are constant on $\Gamma$-orbits and $\Delta$ is nonvanishing, $\Psi$ factors through a map $\ol{\Psi}:T'\HH^2/\Gamma\rw \CC^2\setminus V$, where $V=\{
z_1^3-27z_2^2=0\}$. It turns out that $\ol{\Psi}$ is biholomorphic. Now notice that $PSL_2(\RR)$ acts simply transitively on $U\HH^2$, hence the two are diffeomorphic and furthermore $U\HH^2/\Gamma\cong PSL_2(\RR)/\Gamma$. Thus $PSL_2(\RR)/\Gamma$ embeds into $\CC^2\setminus V$. On the other hand, if ${\mathbb S}^3$ is the unit sphere in $\CC^2$, the intersection $K_{2,3}={\mathbb S}^3\cap V$ is a trefoil knot. One can show that $\ol{\Psi}(U\HH^2/\Gamma)$ and ${\mathbb S}^3\setminus K_{2,3}$ are related by an isotopy in $\CC^2$, and this completes the argument.

The present article provides a generalization of this construction incorporating all torus knots. Recall that $K=K_{p,q}$ is obtained as the intersection in $\CC^2$ of ${\mathbb S}^3$ and the algebraic set $V(F)$ of a polynomial $F=c_1z_1^q+c_2z_2^p$ with $c_1,c_2$ nonzero complex numbers, i.e. $K={\mathbb S}^3\cap V(F)$. We intend to find a discrete subgroup $G\subset\wt{SL_2}(\RR)$ whose cusp form of lowest degree matches the polynomial $F$ for suitable $c_1,c_2$. The known structure of torus knot complements suggests that $G$ should be a central extension of a $(p,q,\infty)$-triangle group $\Gamma_{p,q}$. Indeed, let $\Gamma_{p,q}\subset PSL_2(\RR)$ be a $(p,q,\infty)$-triangle Fuchsian group, i.e. the subgroup of orientation preserving elements of a group generated by the reflections on the sides of a geodesic triangle in $\HH^2$ with angles $\pi/p$, $\pi/q$ and $0$ (one cusp). Then $\Gamma_{p,q}$ has a presentation
$$
\Gamma_{p,q} = \; <\alpha_0,\beta_0 \; ;\;\alpha_0^p=1,\beta_0^q=1> \;,
$$
where $\alpha_0$ and $\beta_0$ are elliptic elements in $PSL_2(\RR)$ representing rotations by angles $2\pi/p$ and $2\pi/q$, about the two finite vertices of the triangle, respectively. The full preimage $\wt{\Gamma}\subset\wt{SL_2}(\RR)$ of $\Gamma_{p,q}$ has a presentation
$$
\wt{\Gamma} = \; <\alpha,\beta \; ;\;\alpha^p=\beta^q > \;,
$$
where $\alpha$ and $\beta$ denote suitable preimages of $\alpha_0$ and $\beta_0$, namely those for which $\alpha^p$ is a generator $\zeta$ of the center of $\wt{SL_2}(\RR)$. There is an exact sequence
$$
1 \lw <\zeta> \lw \wt{\Gamma} \lw \Gamma_{p,q} \lw 1 \;.
$$
On the other hand, let us recall the structure of a torus knot complement and the torus knot group $G_K=\pi_1({\mathbb S}^3\setminus K)$. The zero locus of $F$ is invariant under the linear $\CC^\times$-action on $\CC^2$ given by $\lambda(z_1,z_2)=(\lambda^pz_1,\lambda^qz_2)$. The quotient of $\CC^2\setminus\{0\}$ under this action is the weighted projective line $\PP^1(p,q)$. Hence the complement $\CC^2\setminus V(F)$ is a $\CC^\times$-bundle over the orbifold $\Theta=\PP^1(p,q)\setminus\{regular\;point\}$. This orbifold is isomorphic to the quotient $\HH^2/\Gamma_{p,q}$, and hence the orbifold fundamental group $\pi_1(\Theta)$ is represented as $\Gamma_{p,q}$. If $\lambda$ is restricted to the unit circle, the action preserves the unit sphere ${\mathbb S}^3\subset\CC^2$, and defines a Seifert fibration of ${\mathbb S}^3\setminus K$ over $\Theta$. The two singular fibres correspond to generators $A,B$ of $G_K$, while a regular fibre $S$ is expressed as $S=A^p=B^q$ and represents the center of the group. We have a presentation
$$
G_K = <A,B \;;\; A^p=B^q> \;.
$$
Since knot complements are aspherical, the exact sequence of homotopy groups associated with the Seifert fibration has the form
$$
1 \lw <S> \lw G_K \lw \Gamma_{p,q} \lw 1 \;.
$$
In particular, there is an isomorphism $G_K\cong\wt{\Gamma}$. This implies a weak homotopy equivalence $\wt{SL_2}(\RR)/\wt{\Gamma} \sim {\mathbb S}^3\setminus K$. The immediate question is, whether this equivalence is induced by a homeomorphism. It turns out that the answer is ``yes'' only for the trefoil knot. Notice that, for any natural number $r$ co-prime to both $p$ and $q$, the elements $\alpha_r=\alpha^r$ and $\beta_r=\beta^r$ generate a subgroup $G_r\subset\wt{\Gamma}$ which is normal, of index $r$, and abstractly isomorphic to $\wt\Gamma$. There is a presentation
$$
G_r = \; <\alpha_r,\beta_r \; ;\;\alpha_r^p=\beta_r^q > \;.
$$
Thus $G_r\cong G_K$, and $\wt{SL_2}(\RR)/G_r$ is weakly homotopy equivalent to ${\mathbb S}^3\setminus K$. Now, our task is to determine for which $r$ a diffeomorphism occurs. An interesting result, presented in this paper, is that the requested $r$ is naturally ``chosen'' by the automorphic forms of $\wt{\Gamma}$. To achieve this we need to consider the appropriate automorphic forms for discrete subgroups of $\wt{SL_2}(\RR)$, i.e. we allow fractional degrees and characters. To this end, we follow the approach of Milnor, who has determined in \cite{MilnorBrieskornMani} the algebras of automorphic forms for centrally extended co-compact triangle groups.\\

The main results are contained in sections \ref{Sect Auto-f Gamma_pq}, \ref{Sect SL/G = S-K}, \ref{Sect A Lens Space}, and are briefly summarized as follows: The algebra of automorphic forms for $\wt{\Gamma}$ is generated by two forms $\omega_a,\omega_b$, viewed as functions on the universal cover $\wt{T'\HH^2}$ of $T'\HH^2$. The cusp form of lowest positive degree is expressed as $\omega_\infty=c_a\omega_a^p+c_b\omega_b^q$ and does not vanish anywhere in $\wt{T'\HH^2}$. This allows us to define a map $\Psi:\wt{T'\HH^2}\rw\CC^2$, which factors through $\ol{\Psi}:\wt{T'\HH^2}/G \lw \CC^2\setminus V(F)$, where $F=c_az_1^q+c_bz_2^p$ and $G\subset\wt{\Gamma}$ is the common kernel of the characters of $\wt{\Gamma}$ corresponding to the generators $\omega_a$ and $\omega_b$. We have $G=G_{pq-p-q}$. There is a diffeomorphism $\wt{SL_2}(\RR)/G \cong {\mathbb S}^3\setminus K$ obtained from the map $\ol{\Psi}$. Finally, the homogeneous space $PSL_2(\RR)/\Gamma_{p,q}$ is found to be diffeomorphic to a knot complement in a 
suitable lens space.\\

The paper contains a fair amount of known material included in order to reduce the necessary prerequisites to a minimum. Besides $\cite{MilnorBrieskornMani}$, another very important reference is Ogg's book \cite{Ogg}. Ogg works in the classical setting where the automorphic forms are defined as functions on $\HH^2$, fractional degrees and characters are allowed but $\wt{SL_2}(\RR)$ is not mentioned explicitly. The first chapter of \cite{Ogg} contains a treatment of $(2,q,\infty)$-triangle groups, and a construction for the generators of the algebra of automorphic forms. The approaches in \cite{MilnorBrieskornMani} and \cite{Ogg} are combined to obtain generators of the algebra of automorphic forms for $\wt{\Gamma}$.

Other sources presenting similar perspectives on the relation between automorphic forms and quasi-homogeneous singularities include the following. Dolgachev, \cite{Dolga1974}, gives an outline of the relation just mentioned, in the case of co-compact Fuchsian groups, with an emphasis on special triangle groups. The same author, in \cite{Dolga1975}, outlines a generalization of these ideas, considering uniform lattices acting on higher dimensional complex homogeneous space. Wagreich, \cite{Wagreich1}, \cite{Wagreich2}, \cite{Wagreich3}, considers Fuchsian groups of the first kind, not necessarily co-compact. In particular, Wagreich \cite{Wagreich2} provides a classification of all such groups whose algebra of automorphic forms admits a generating set of at most 3 elements. This result does not cover our case because Wagreich considers only forms of integral degree, while here we allow fractional degrees. Perhaps it would be interesting to try to carry out Wagreich's program form \cite{Wagreich2}, but allowing 
fractional degrees and characters. More recently, Natanzon and Pratoussevitch, \cite{NatanPratou}, have used and developed this framework in their study of moduli spaces of Gorenstein singularities.

Very recently, Pinsky \cite{Pinsky} has obtained results related to the ones presented here, using completely different methods. In particular, she has given an alternative explicit construction of the diffeomorphism from corollary \ref{Coro L-K=PSL/Gamma}. Pinsky's work appeared only after this paper was posted on arxiv.org. The author would like to thank the referee for this remark.

\section{Background}\label{Sect_Background}

\subsection{Geometric structures}\label{Sect_GeoSrt}

A complete and locally homogenous Riemannian metric $g$ on a manifold $M$ is called a {\it geometric structure} on $M$. Here complete means that
every geodesic in $M$ may be extended to $(-\infty,\infty)$; locally homogenous means that any two points in $M$ have isometric neighborhoods. A complete, homogenous, simply connected Riemannian manifold $(N, h)$, whose isometry group is maximal, is called a {\it geometry}. We shall say that the geometric structure $(M,g)$ is modelled on the geometry $(N,h)$, if each point of $M$ has a neighborhood, isometric to an open set of $N$.

Let $M$ be a manifold and let $\wt{M}$ be the universal cover of $M$. If $\wt{M}$ admits a  geometry $g$ such that the covering action of $\pi_1(M)$ is by isometries, the covering map induces a geometric structure on $M$ modelled on $(\wt{M},g)$.

In dimension 2, the theory initiated by Poincar\'e and Klein culminates in the uniformization theorem proven independently by Poincar$\acute{\rm e}$ and Koebe. It states, that there are three geometries: the Euclidian plane, the hyperbolic plane and the sphere:
$$
\EE^2 \; , \quad \HH^2 \; , \quad {\mathbb S}^2 \;.
$$
Any 2-manifold has a geometric structure obtained by identifying its universal cover with one of the three geometries above. Some 2-manifolds ($\RR^2, \quad \RR^2\setminus \{point\}$, M$\ddot{\rm o}$bius band) have geometric structures modelled on two distinct geometries.

The situation with 3-manifolds is much more complicated. Thurston, \cite{ThurstonBull}, showed that any 3-dimensional geometric structure is to be modelled on one of the following eight geometries:
\begin{gather}\label{ScottSpaces}
\begin{array}{lll}
\EE^3 \qquad \qquad \qquad  & {\mathbb S}^2 \times \EE \qquad \qquad \qquad & {\rm Nil} \\
\HH^3 & \HH^2 \times \EE & {\rm Sol} \\
{\mathbb S}^3 & \widetilde{SL_2}(\RR) & \\
\end{array}
\end{gather}
However, it is not true that any 3-manifold readily admits a geometric structure, the situation is far more complicated than in the case of 2-manifolds. The theory was led to a great success culminating with the work of Perelman, \cite{Perel1}, \cite{Perel2}, \cite{Perel3}. See e.g. \cite{TianMorg} for an exposition and further references. We shall refrain from comments on the general results, as our concerns here are modest in this respect. We only consider concrete, well-known manifolds and we are interested in explicit construction, rather than abstract existence, of geometric structures. We refer the reader to the surveys of Scott \cite{Scott} and Bonahon \cite{Bonahon}, which predate the general existence theorem, but contain excellent descriptions of the eight geometries, as well as many examples. The definitions and results needed for our purposes are stated in the text below.

\subsection{Seifert fibrations and orbifolds}\label{Sect_Seifert_and_Orbi}

In this section we discuss briefly a class of 3-manifolds introduced by Seifert \cite{Seifert}. These are manifolds admitting a so called Seifert fibration - a special kind of circle foliation. The "base" has an orbifold structure as described below. The Seifert fibration structures on 3-manifolds are closely related to their geometric properties, as explained in Scott's article \cite{Scott}, which we follow here to some extend.

We start with the general definition of an orbifold as introduced by Thurston in \cite{Thurston}. Intuitively, an orbifold is a space locally modelled on $\RR^n$ modulo finite group actions.

\begin{definition}\label{Def Orbifold}
An n-dimensional {\it orbifold} $\Theta$ consists of the following data. A Hausdorff paracompact topological space $X_{\Theta}$, called the {\it underlying space}, covered by a collection $\{U_{i}\}$ of open sets, called {\it charts}, closed under finite intersections. To each $U_i$ is associated a finite group $\Gamma_i$, an action of $\Gamma_i$ on an open subset $\tilde{U}_i$ of $\RR^n$, and a homeomorphism $\varphi_i:U_i \lw \tilde{U}_i/\Gamma_i$. Whenever $U_i \subset U_j$ there is an injective homomorphism
\begin{gather*}
f_{ij}:\Gamma_i \hookrightarrow \Gamma_j
\end{gather*}
and an embedding
\begin{gather*}
\tilde{\varphi}_{ij}:\tilde{U}_i \hookrightarrow \tilde{U}_j
\end{gather*}
equivariant with respect to $f_{ij}$ (i.e., for $\gamma \in \Gamma_i$, $\tilde{\varphi}_{ij}(\gamma x)=f_{ij}(\gamma)\tilde{\varphi}_{ij}(x)$) such that the diagram below commutes:
\begin{gather*}
\begin{array}{ccc}
 \tilde{U}_i & \stackrel{\tilde{\varphi}_{ij}}{\lw} & \tilde{U}_j \\
 \Big\downarrow & & \Big\downarrow \\
 \tilde{U}_i/\Gamma_i & \stackrel{\varphi_{ij}=\tilde{\varphi}_{ij}/\Gamma_i}{\lw} &
 \tilde{U}_j/\Gamma_j \\
 \Big\uparrow & & \quad \Big\downarrow f_{ij} \\
 \varphi_i \; \vert \; \quad \quad & & \tilde{U}_j/\Gamma_j \\
 \Big\vert & & \Big\uparrow \\
 U_i & \hookrightarrow & U_j
\end{array}
\end{gather*}
If $x\in X_{\Theta}$ and $U=\tilde{U}/\Gamma$ is a chart about $x$, we denote by $\Gamma_x$ the isotropy group of any point in the preimage of $x$ in $\tilde{U}$. The set $\Sigma_{\Theta}:=\{x \in X_{\Theta} : \Gamma_x \ne \{1\} \}$ is called the {\it singular locus} of $\Theta$. The points of $\Sigma_{\Theta}$ are called {\it singular}, the rest of the points of $X_\Theta$ - {\it regular}.
\end{definition}

Note, that the maps $\tilde{\varphi}_{ij}$ are defined up to composition with elements of $\Gamma_j$, and $f_{ij}$ are defined up to conjugation by elements of $\Gamma_j$. It is not generally true that $\tilde{\varphi}_{ik}=\tilde{\varphi}_{jk}\circ\tilde{\varphi}_{ij}$ when $U_i \subset U_j \subset U_k$, but there should exist an element $\gamma \in \Gamma_k$ such that $\gamma \tilde{\varphi}_{ik}=\tilde{\varphi}_{jk}\circ \tilde{\varphi}_{ij}$ and $\gamma f_{ik}(g) \gamma^{-1} = f_{jk} \circ f_{ij}(g)$. Another remark to be made is that the covering $\{U_i\}$ is not an intrinsic part of the structure of the orbifold: two coverings give rise to the same orbifold structure if they can be combined consistently to give a finer cover still satisfying the above conditions.

Note also, that any manifold is an orbifold with empty singular locus.

\begin{definition}\label{Def OrbiCover} An {\it orbifold-cover} of an orbifold $\Theta$ is an orbifold $\Xi$ together with a projection $\rho : X_{\Xi} \lw X_{\Theta}$, such that every point $x \in X_{\Theta}$ has a neighborhood $U=\tilde{U}/\Gamma$ such that each component $V_i$ of $\rho^{-1}(U)$ is isomorphic to $\tilde{U}/\Gamma_i$, where $\Gamma_i$ is a subgroup of $\Gamma$. The isomorphism $\rho^{-1}(U)\cong\tilde{U}/\Gamma_i$ is required to respect the projections. We use the notation $\rho:\Xi \lw \Theta$.

An orbifold-cover $\varrho:\wt{\Theta}\lw \Theta$ is called {\it universal}, if for any other orbifold-cover $\rho:\Xi \lw \Theta$, there is a lift $\tilde{\varrho}:\wt{\Theta}\lw \Xi$ which is an orbifold-cover, and $\varrho = \rho\circ\tilde{\varrho}$.

An orbifold is called {\it good} if it admits an orbifold-cover which is in fact a manifold, and {\it bad} - otherwise.
\end{definition}

\begin{proposition}\label{Prop ManiCoverOrbi}{\rm (Thurston \cite{Thurston})} Let $M$ be a manifold and let $G$ be a group acting properly discontinuously on $M$. Then $M/G$ has the structure of an orbifold and the projection $M\lw M/G$ is an orbifold-cover.
\end{proposition}

In the above notation, if $H$ is a normal subgroup of $G$, then $M/H$ is an orbifold-cover of $M/G$ under the action of the factorgroup $G/H$. If $M$ is a simply connected manifold, it is the universal orbifold-cover of $M/G$. Existence of a universal orbifold-cover $\wt{\Theta}$ for an arbitrary orbifold ${\it \Theta}$ is shown in \cite{Thurston}, along with the description of the corresponding group of deck transformations called the {\it orbifold fundamental group} and denoted $\pi_1(\Theta)$.

In the 2-dimensional case, both the universal cover and the fundamental group of an orbifold are easier to describe. An outline can be found in \cite{Scott}. If $\Theta$ is a good 2-dimensional orbifold, covered by a manifold $M$, then we can define a geometric structure on $\Theta$ using the geometry of the universal cover $\wt{M}$ of $M$, which must be $\EE^2, {\mathbb S}^2$ or $\HH^2$.

From now on we consider only 2-dimensional orbifolds and we restrict the type of allowed singularities to cone points, which are defined as follows. A singular point $x\in X_\Theta$ is called a {\it cone point} of index $k$, if there is a chart $U$ about $x$ such that the corresponding group $\Gamma_x$ is isomorphic to $\ZZ_k$ and acts by rotations around a point $\tilde{x} \in \tilde{U}$. Note that if $\Theta$ is an orbifold whose singular locus contains only cone points, then the singularities of any orbifold-cover of $\Theta$ are also limited to cone points. Scott, in \cite{Scott}, gives the following method to compute the fundamental group of a 2-dimensional orbifold with cone points.

\begin{proposition}\label{Prop Orbigroup Scott}
Let $\Theta$ be a 2-dimensional orbifold with singular locus consisting of a finite number of cone points $a_1,...,a_n$ with indexes $k_1,...,k_n$ respectively. Let $D_1,...,D_n$ be disjoint disc neighborhoods of $a_1,...,a_n$. Set $N:=X_{\Theta}\setminus (D_1\cup...\cup D_n)$. Let $H$ be the smallest normal subgroup of $\pi_1(N)$ containing the elements $c_1^{k_1},...,c_n^{k_n}$, where $c_j$ represents the circle $\pl D_j$. Then
\begin{gather*}
\pi_1(\Theta) \cong \pi_1(N)/H
\end{gather*}
Suppose $\pi_1(N)$ has a presentation with generators $x_1,...,x_l$ and relations $r_1=1,...,r_m=1$, where $r_j$ is some word in the generators. Then $\pi_1(\Theta)$ has a presentation
$$
\pi_1(\Theta)=<x_1,...,x_l\; ;\; r_1=1,...,r_m=1,c_1^{k_1}=1,...,c_n^{k_n}=1> \;.
$$
\end{proposition}

We are now ready to proceed with the definition of Seifert fibred 3-manifolds, and to describe some of their basic properties.

\begin{definition}\label{Def fibred tori}
The solid torus $T := \DD^2 \times {\mathbb S}^1$ is a trivial circle bundle over the disc $\DD^2:=\{ z \in \CC : \abs{z}=1\}$. The fibres are $(z,e^{2\pi i t}), \; t\in [0,1]$. The torus $T$ with this fibration structure is called {\it trivially fibred torus}.

Let $p$ and $q$ be co-prime integers. The solid torus $\DD^2 \times {\mathbb S}^1$ with the circle foliation
\begin{gather*}
(ze^{2\pi i \frac{p}{q}t},e^{2\pi i t}) \quad , \quad t \in [0,q]
\end{gather*}
is called {\it (p,q)-twisted fibred torus} and denoted by $T(p,q)$. The central fibre $\{0\} \times {\mathbb S}^1$ of a fibred torus is called the {\it core}.
\end{definition}

\begin{definition}\label{Def Seifert Fibr} A 3-manifold $M$ is called {\it Seifert manifold}, if it can be presented as a disjoint union of circles, called fibres, satisfying the following property. Each fibre $l$ admits a tubular neighborhood $T_l$ in $M$, consisting of fibres, such that $T_l$ is a fibred torus (possibly trivial) with core $l$.

If a given fibre possesses a neighborhood which is a trivially fibred torus, the fibre is called {\it regular}. Otherwise the fibre is called {\it singular}.

A fibration of this kind is called a {\it Seifert fibration} on $M$. The fibred tori $T_l$ are called {\it trivializing tori} of the fibration.
\end{definition}

A fibred torus $T(p,q)$ can be covered by a trivially fibred torus in different ways. Each of the following transformations generates an action of the
cyclic group $\ZZ_q$ on $\DD^2 \times {\mathbb S}^1$, which sends fibres to fibres.
\begin{gather}\label{R_p/q on DxS 1}
\begin{array}{ccc}
\DD^2 \times {\mathbb S}^1 & \lw & \DD^2 \times {\mathbb S}^1 \\
(z,e^{2\pi i t}) & \lo & (ze^{2\pi i \frac{p}{q}t},e^{2\pi i t})
\end{array}
\end{gather}
\begin{gather}\label{R_p/q on DxS 1/q}
\begin{array}{cccc}
\DD^2 \times {\mathbb S}^1 & \lw & \DD^2 \times {\mathbb S}^1 \\
(z,e^{2\pi i t}) & \lo & (ze^{2\pi i \frac{p}{q}t},e^{2\pi i (t+\frac{1}{q})})
\end{array}
\end{gather}
The first transformation keeps all points in the core fixed, and its action results in an orbifold-covering between the solid tori. The second one defines a regular covering of manifolds.

The set of fibres in a fibred torus form an orbifold described as follows. In the notation of the definition of a fibred torus, put $D=\DD^2\times \{1\}\subset T$. If $T$ is trivially fibred, then each fibre intersects $D$ exactly once, and hence $D$ parametrizes the fibres. In this case we have a regular fibration, and the base space is the manifold $D$. If $T$ is $(p,q)$-fibred, then the core intersects $D$ once, while each of the other fibres intersects $D$ at $q$ distinct points. The set of fibres is the orbifold $D/\ZZ_q$, where $\ZZ_q$ is viewed as the groups of $q$-th roots of 1 acting on $D$ by multiplication; the singular locus consists of a single cone point. More generally, the set of fibres in a Seifert fibred 3-manifold M is a 2-dimensional orbifold $\Theta$ whose singular locus consists of cone points corresponding to the singular fibres in $M$. In such a case, $\Theta$ is called the {\it base orbifold} of the Seifert fibration $M$. When the singular fibres are finite in number, we can compute 
the 
fundamental group of the base orbifold using Proposition \ref{Prop Orbigroup Scott}.

\begin{remark}\label{Zab SubgrupRegFibre}
Let $M$ be a connected Seifert fibred manifold.

(*) All regular fibres in $M$ are isotopic. Indeed, let $l$ and $l'$ be two regular fibres. Let $\gamma$ be a path connecting $l$ with $l'$, which does not intersect a singular fibre. Then $\gamma$ can be covered by a finite number of open trivially fibred solid tori $T_1,...,T_n$, such that $T_1 \supset l$ and $T_n \supset l'$. Now the result follows from the fact, that all fibres in a trivially fibred solid torus are isotopic.

(**) Let $l$ be a regular fibre, let $m$ be a base point for $\pi_1(M)$ lying on a regular fibre, and let $\gamma$ be a path from the point $m$ to $l$. Then the element $\tau \in \pi_1(M,m)$ represented by $\gamma^{-1} . l . \gamma$ does not depend on the choice of $\gamma$. Indeed, if $\gamma_1$ is another path from $m$ to $l$ and $l_m$ be the regular fibre containing $m$, then, by (*) above, $\gamma^{-1} . l . \gamma \sim l_m \sim \gamma_1^{-1} . l . \gamma_1$, where $\sim$ denotes a homotopy keeping the base point $m$ fixed.

Hence, the subgroup of $\pi_1(M,m)$ generated by $\tau$ is normal, and does not depend on the choice of $m$ and $l$. We refer to this subgroup as {\it the subgroup of $\pi_1(M)$ generated by a regular fibre}.
\end{remark}

\begin{lemma}\label{Lemma ExactSeq SeifertFib}
Let $M$ be a connected Seifert fibred manifold with base orbifold $\Theta$. Let $L$ be the subgroup of $\pi_1(M)$ generated by a regular fibre. Then $L$ is contained in the center of $\pi_1(M)$ and there is an exact sequence
\begin{gather}\label{ExactSeq SeifertFib (Lema)}
1\lw L \lw \pi_1(M) \lw \pi_1(\Theta) \lw 1
\end{gather}
\end{lemma}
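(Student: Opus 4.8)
The plan is to prove the two assertions separately: first that $L$ is central, by a geometric ``dragging'' argument, and then that $\pi_1(M)/L\cong\pi_1(\Theta)$, by computing $\pi_1(M)$ with van Kampen's theorem and comparing the result with the presentation of $\pi_1(\Theta)$ furnished by Proposition \ref{Prop Orbigroup Scott}. Throughout I assume, as in the applications, that $M$ is orientable and has only finitely many singular fibres.

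To show centrality, fix a regular fibre $l_m$ through the base point $m$ and let $\tau\in\pi_1(M,m)$ be the class it represents, as in Remark \ref{Zab SubgrupRegFibre}. Let $\alpha$ be an arbitrary loop at $m$. Since the singular fibres form a closed $1$-dimensional subset of the $3$-manifold $M$, a generic perturbation makes $\alpha$ disjoint from them, so every fibre meeting $\alpha$ is regular. These fibres assemble into an oriented circle bundle over the circle traced by $\alpha$; such a bundle is trivial, so I can choose a continuous family of parametrizations $\lambda_s\colon[0,1]\rw M$ of the fibre through $\alpha(s)$ with $\lambda_s(0)=\alpha(s)$ and with $\lambda_0=\lambda_1$ equal to a fixed parametrization of $l_m$. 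The assignment $H(s,t)=\lambda_s(t)$ then defines a map of the torus $\SS^1\times\SS^1$ into $M$ whose restrictions to the two generating circles represent $\alpha$ and $\tau$. The existence of this torus forces $[\alpha]$ and $\tau$ to commute; as $\alpha$ was arbitrary, $\tau$, and hence $L=\langle\tau\rangle$, lies in the center of $\pi_1(M)$. In particular $L$ is normal.

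For the exact sequence I would compute $\pi_1(M)$ directly. Remove disjoint disc neighbourhoods $D_1,\dots,D_n$ of the cone points $a_1,\dots,a_n$ of $\Theta$ and set $N=X_\Theta\setminus(D_1\cup\dots\cup D_n)$; over $N$ all fibres are regular, so the union $M_N$ of fibres lying over $N$ is a circle bundle over $N$, and since $N$ has the homotopy type of a graph this bundle is trivial, $M_N\cong N\times\SS^1$. Hence $\pi_1(M_N)=\langle x_1,\dots,x_l,h\mid r_1,\dots,r_m,[x_i,h]\rangle$, where the $x_i,r_j$ are the generators and relations of $\pi_1(N)$ and $h=\tau$ is the (central) fibre class. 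The manifold $M$ is recovered by gluing back the fibred solid tori $T_1,\dots,T_n$ over the discs $D_j$. Applying van Kampen's theorem, each $T_j$ contributes its core $s_j$ together with the relations dictated by its $(p_j,q_j)$-fibred structure (Definition \ref{Def fibred tori}): the regular fibre on $\partial T_j$ is homotopic to $s_j^{q_j}$ up to the contractible disc rotation, and the meridian of $T_j$, which bounds a disc and therefore dies, identifies the class $c_j$ of $\partial D_j$ with a word in $s_j$ and $h$.

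It then remains to pass to the quotient $\pi_1(M)/L$. Since $h$ is central, $L=\langle h\rangle$ is normal and killing it is the same as imposing $h=1$. Doing so eliminates the relations $[x_i,h]$, turns the fibre relation into $s_j^{q_j}=1$, and lets the meridian relation express $s_j$ (up to conjugacy) as $c_j^{-1}$; eliminating $s_j$ leaves exactly $\langle x_1,\dots,x_l\mid r_1,\dots,r_m,c_1^{k_1}=1,\dots,c_n^{k_n}=1\rangle$ with $k_j=q_j$, which is the presentation of $\pi_1(\Theta)$ from Proposition \ref{Prop Orbigroup Scott}. This exhibits a surjection $\pi_1(M)\rw\pi_1(\Theta)$ with kernel $L$ --- the map induced by collapsing each fibre to its point in $\Theta$ --- and hence the desired exact sequence. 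I expect the main obstacle to be precisely this last comparison: keeping accurate track of the Seifert/gluing invariants (in particular the meridian relations) and verifying that, after the central class $h$ is killed, the core relations $s_j^{q_j}=1$ become exactly the cone relations $c_j^{k_j}=1$ with the correct indices $k_j=q_j$.
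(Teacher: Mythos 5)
The paper does not actually prove this lemma --- its ``proof'' is a one-line citation to Scott's survey (Lemma 3.2 there). What you have written is, in outline, the standard argument that the cited reference itself uses: trivialize the circle bundle over the complement $N$ of the cone-point neighbourhoods, glue back the fibred solid tori via van Kampen, and observe that killing the central fibre class $h$ turns the presentation into the one Proposition \ref{Prop Orbigroup Scott} gives for $\pi_1(\Theta)$, with the cone index of a $(p_j,q_j)$-fibred torus correctly being $q_j$. That part of your plan is sound, and you are right that the only real labour is the bookkeeping of the meridian relations (your parenthetical ``$s_j$ equals $c_j^{-1}$ up to conjugacy'' is not literally what comes out --- one gets $s_j\equiv c_j^{\pm p_j}$ and $c_j^{q_j}\equiv 1$ modulo $h$, which since $\gcd(p_j,q_j)=1$ yields the same quotient).

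There is, however, one genuine gap, and it sits in the centrality argument: the assertion that the fibres over the loop $\alpha$ ``assemble into an \emph{oriented} circle bundle over the circle'' is exactly the point at issue, and it does not follow from orientability of $M$. A circle bundle over ${\mathbb S}^1$ is a torus or a Klein bottle according to whether the monodromy preserves the orientation of the fibre; in the Klein bottle case one gets $\alpha\tau\alpha^{-1}=\tau^{-1}$, so $L$ is normal but not central. The unit tangent bundle of a non-orientable hyperbolic surface is an orientable Seifert fibred $3$-manifold exhibiting precisely this: the fibre class is inverted by any orientation-reversing loop in the base. So in general the lemma only gives normality of $L$ and the exact sequence; centrality requires the Seifert bundle to be fibre-orientation-preserving around every loop (e.g.\ total space and base orbifold both orientable), not merely $M$ orientable. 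This imprecision is already present in the statement of the lemma as the paper records it, and it is harmless for every application in the paper, since there the base orbifold is a punctured sphere with cone points, hence orientable, and your pulled-back bundle over $\alpha$ really is a torus. But your proof should either add the hypothesis explicitly or justify why the monodromy of the fibre orientation is trivial, rather than assume the bundle is oriented.
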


\begin{proof} See e.g. \cite{Scott} Lemma 3.2.
\end{proof}

\begin{example}
Let $\Theta$ be a good 2-dimensional orbifold and let $P$ be its universal cover. Thus $P$ is either ${\mathbb S}^2,\,\EE^2$ or $\HH^2$, and $\Theta$ is isomorphic to $P/\Gamma$ for some discrete subgroup $\Gamma\subset {\rm Isom}(P)$. Let $UP$ be the unit tangent bundle on $P$, i.e. the bundle whose total space consists of all tangent vectors to $P$ with unit length. Then $\Gamma$ acts freely on $UP$ and the quotient $M=UP/\Gamma$ is a smooth manifold. There is a Seifert fibration on $M$ whose fibres are the images of the fibres $U_pP$, for $p\in P$. The base orbifold of this Seifert fibration is $\Theta$. It is natural to call $M$ the unit tangent bundle on $\Theta$. The fundamental group $\pi_1(M)$ is a central extension of $\Gamma$ by $\pi_1(UP)$. For the three possible cases we have $\pi_1(U{\mathbb S}^2)=\ZZ_2$, $\pi_1(U\EE^2)=\ZZ$, $\pi_1(U\HH^2)=\ZZ$.
\end{example}

\subsection{Torus knots}\label{Sect Knots}

We refer to Burde and Zieschang \cite{BurdeZieschang} for the classical knot theoretic notions, some history and bibliography. In this section we focus on some properties of torus knots needed for our purposes.

An embedded circle $K \hookrightarrow {\mathbb S}^3$ is called a {\it knot}. The knot type of $K$ is the isotopy equivalence class of the embedding. We only consider tame (smooth) knots; for such knots $K$ and $L$ the relation of isotopy is equivalent to the existence of an orientation preserving homeomorphism of ${\mathbb S}^3$ sending $K$ to $L$. The 3-sphere will be considered as the unit sphere in the complex plane $\CC^2$, i.e.
\begin{gather}\label{S^3 < C^2}
{\mathbb S}^3 = \{ (z_1,z_2)\in \CC^2 : \abs{z_1}^2+\abs{z_2}^2=1 \} \;.
\end{gather}
A knot is called {\it trivial} (or {\it unknotted}), if it is equivalent to the circle $U:=\{\abs{z_1}=1,z_2=0\}$. A nontrivial knot is called a {\it torus knot}, if it is equivalent to a simple closed curve on the torus
\begin{gather}\label{T^2 < S^3}
T^2:=\{ (z_1,z_2)\in \CC^2 : \abs{z_1}=a,\abs{z_2}=b\} \; \subset \; {\mathbb S}^3
\end{gather}
where $a$ and $b$ are positive numbers satisfying $a^2+b^2=1$. The isotopy class of such a curve $K$ is determined by the pair of co-prime integers $(p,q)$, which represent $K$ as an element of the fundamental group of $T^2$ in terms of the standard generators $\{\abs{z_1}=a,z_2=b\}$ and $\{z_1=a,\abs{z_2}=b\}$. In such a case $K$ is called a $(p,q)$-{\it torus knot} and is denoted by $K_{p,q}$. We have the relation $K_{p,q}\sim K_{p',q'}$ if and only if $(p',q')$ equals $(-p,-q),(q,p)$ or $(-q,-p)$. The knot $K_{p,-q}$ is the mirror image of $K_{p,q}$; the complements ${\mathbb S}^3 \setminus K_{p,q}$ and ${\mathbb S}^3 \setminus K_{p,-q}$ are homeomorphic but not under an orientation preserving homeomorphism.

The fundamental group $\pi_1({\mathbb S}^3\setminus K)$ is called the {\it knot group} and is denoted by $G_K$. One important result characterizing torus knots, conjectured by Neuwirth and proved by Burde and Zieschang, is the following: a knot group admits a non-trivial center if and only if the knot is either an unknot or a torus knot; see \cite{BurdeZieschang} for details and further references. This algebraic characterization is related to the presence of a Seifert fibration in a torus knot complement, the classical result of Seifert mentioned earlier.

Let us be more explicit. Let $p$ and $q$ be co-prime natural numbers, fixed for the rest of the paper. Assume $p<q$. Consider the linear $\CC^\times$ action on $\CC^2$ given by
\begin{gather*}
\begin{array}{ccc}
\CC^{\times} \times \CC^2 & \lw & \CC^2 \\
\lambda\cdot(z_1,z_2)&=&(\lambda^pz_1,\lambda^qz_2) \;.
\end{array}
\end{gather*}
The origin ${\bf o}:=(0,0)\in \CC^2$ is a fixed point of the action. The quotient
$$
\PP^1(p,q) = (\CC^2\setminus\{{\bf o}\})/\CC^\times
$$
is the so called (p,q)-weighted complex projective line. Restricting $\lambda$ to ${\mathbb S}^1$ we obtain the unitary flow
\begin{gather}\label{potoka}
h_t := \begin{pmatrix} e^{2\pi ipt} & 0 \\
                       0 & e^{2\pi iqt}
       \end{pmatrix}
        \quad , \quad t\in \RR \;.
\end{gather}

\begin{proposition}\label{Prop S(p,q)=S^3/h_t}
The action of the flow $h_t$ defines a Seifert fibration in ${\mathbb S}^3$. The base orbifold, denoted $\PP^1(p,q)$, has underlying topological space ${\mathbb S}^2$, and two singular points: cone points of indices $p$ and $q$ respectively. The orbifold fundamental group is trivial.
\end{proposition}

\begin{proof}
All 3-spheres centered in ${\bf o}$ are invariant under $h_t$. Thus $h_t$ defines a circle foliation in ${\mathbb S}^3$. To show that this foliation is actually a Seifert fibration we need to find a system of trivializing tori (see definition \ref{Def Seifert Fibr}). The coordinate complex lines $\CC_1$ and $\CC_2$ in $\CC^2$ are also $h_t$-invariant. $\CC_1\setminus \{ {\bf o}\}$ and $\CC_2\setminus \{ {\bf o}\}$ consist of the orbits with periods $\frac{1}{p}$ and $\frac{1}{q}$ respectively. The remaining orbits have period 1. Put
\begin{align*}
S_1 = & \CC_1\cap{\mathbb S}^3=\{ (z_1,z_2)\in \CC^2 : \abs{z_1}=1,\; z_2=0\} \;, \\
S_2 = & \CC_2\cap{\mathbb S}^3=\{ (z_1,z_2)\in \CC^2 : z_1=0,\; \abs{z_2}=1 \} \;.
\end{align*}
The solid torus ${\mathbb S}^3\setminus S_1$ is $h_t$-invariant. The orbits of the flow are of the form $(z_1e^{2\pi i pt},z_2e^{2\pi i qt})$, and by reparametrization $t'=tq$ we obtain $(z_1e^{2\pi i \frac{p}{q}t'},z_2e^{2\pi i t'})$. Hence, the flow $h_t$ defines a structure of a (p,q)-twisted fibred torus in ${\mathbb S}^3\setminus S_1$. The core of this fibred torus is $S_2$. It is fixed by the transformation $h_{\frac{1}{q}}$. Analogously, ${\mathbb S}^3\setminus S_2$ is a (q,p)-twisted fibred torus, whose core $S_1$ is fixed by the transformation $h_{\frac{1}{p}}$. All orbits except $S_1$ and $S_2$ have period 1, and are regular in the sense of Seifert. Thus ${\mathbb S}^3$ is a Seifert fibration with two singular fibres $S_1$ and $S_2$. The base orbifold $\PP^1(p,q)$ has two singular cone points $a$ and $b$ of indexes $p$ and $q$ respectively. The set of regular points $\PP^1(p,q)\setminus\{a,b\}$ is an annulus, because it is the orbit space of ${\mathbb S}^3\setminus(S_1\cup S_2)$ which is a fibred 
torus with removed core. It follows that the underlying surface of $S(p,q)$ is a 2-sphere. The fact that the orbifold fundamental group of $S(p,q)$ is trivial can be obtained directly from Proposition \ref{Prop Orbigroup Scott} using the fact that $p$ and $q$ are co-prime. Alternatively, it follows from Lemma \ref{Lemma ExactSeq SeifertFib} and the fact that ${\mathbb S}^3$ is simply connected.
\end{proof}

Any regular orbit of $h_t$ is a $(p,q)$-torus knot in the 3-sphere in which it belongs. Explicitly, let $z_{01},z_{02}$ be two nonzero complex numbers, then the orbit through $(z_{01},z_{02})$ is a $(p,q)$-knot lying on the torus
\begin{gather*}
\{ (z_1,z_2)\in \CC^2 : \abs{z_1}=\abs{z_{01}},\; \abs{z_2}=\abs{z_{02}} \}\subset {\mathbb S}^3_{|z_{01}|^2+|z_{02}|^2} \;.
\end{gather*}

Now let $c_1,c_2$ be any two nonzero complex numbers, and consider the polynomial
\begin{gather*}
f(z_1,z_2):=c_1z_1^q+c_2z_2^p \;.
\end{gather*}
The analytic set $V(f) := \{ (z_1,z_2)\in \CC^2 : c_1z_1^q+c_2z_2^p=0 \}$ has a unique singularity at the origin. We have
\begin{gather}\label{flowActFunc}
f(\lambda\cdot(z_1,z_2))=\lambda^{pq}f(z_1,z_2) \;.
\end{gather}
Hence, $V(f)$ is invariant under the action of $\CC^\times$. It does not intersect the coordinate lines $\CC_1$ and $\CC_2$, except at the point ${\bf o}$, and so it does not contain singular orbits of $h_t$. The intersection
$$
K=V(f)\cap{\mathbb S}^3
$$
is a regular orbit of $h_t$, and hence a the (p,q)-torus knot in ${\mathbb S}^3$.

To simplify the notation, we take $c_1=c_2=1$. Then $K$ can be expressed as
\begin{gather}\label{Vyzela}
K = \; \left\{ \begin{array}{l} z_1=a_1 \; e^{2\pi ipt} \\
                     z_2=a_2 \; e^{2\pi iqt + \frac{i\pi}{p}} \;
       \end{array}
       \quad \vert \quad t\in[0,1] \; \right\} 
\end{gather}
with
\begin{gather*}
a_1^2+a_2^2=1 \quad,\quad a_1^q-a_2^p=0 \quad , \quad a_1,a_2 > 0 \;.
\end{gather*}
$K$ lies on the torus $T:=\{ \abs{z_1}=a_1 ,\abs{z_2}=a_2 \}$. Since $K$ is itself one regular orbit of $h_t$, its complement ${\mathbb S}^3 \setminus K$ is $h_t$-invariant. As a direct consequence of Proposition \ref{Prop S(p,q)=S^3/h_t} we obtain

\begin{proposition}\label{Prop SeifFib TorKnot}
The flow $h_t$ defines a Seifert fibration in ${\mathbb S}^3\setminus K$. The base orbifold, $\Theta$, is a 2-dimensional orbifold with underlying space $\RR^2$ (a punctured 2-sphere) and two singular points: cone points of indexes $p$ and $q$.
\end{proposition}

We have a decomposition
\begin{gather*}
{\mathbb S}^3\setminus K = \TT(p,q) \sqcup (T\setminus K) \sqcup \TT(q,p) \;,
\end{gather*}
where
\begin{gather}\label{The TrivTori of S-K}
\begin{array}{l}
S_1 \hookrightarrow \TT(p,q):=\{ \abs{z_1}<a_1 \; , \; \abs{z_2}=\sqrt{1-\abs{z_1}^2} \} \;, \\
\; \\
K \hookrightarrow T:=\{ \abs{z_1}=a_1 \; , \; \abs{z_2}=a_2 \} \;, \\
\; \\
S_2 \hookrightarrow \TT(q,p):=\{ \abs{z_1}=\sqrt{1-\abs{z_2}^2} \; ,\; \abs{z_2}<\mu \} \;. \\
\end{array}
\end{gather}
From this decomposition of ${\mathbb S}^3\setminus K$ we can produce the well-known presentation of the torus knot group:
\begin{gather}\label{TheKnotGroup}
G_{p,q}:=G_K=\pi_1({\mathbb S}^3\setminus K_{p,q}) \cong <S_1,S_2\; ;\; S_1^p=S_2^q> \;.
\end{gather}
The element $S=S_1^p=S_2^q$ represents a regular fibre. This element is a generator of $\pi_1(T\setminus K)$ and of the center $Z(G_K)$ (see remark \ref{Zab SubgrupRegFibre} and Lemma \ref{Lemma ExactSeq SeifertFib}). We have the central extension
$$
1 \lw <S> \lw G_K \lw \pi_1(\Theta) \lw 1 \;.
$$
Thus $\pi_1(\Theta)\cong G_K/Z(G_K) \cong \ZZ_p*\ZZ_q$. The universal orbifold-cover of $\Theta$ will be constructed in section \ref{Sect Def Triang Groups}, see Proposition \ref{Prop H->Theta}.

\section{Automorphic forms on $\wt{SL_2}(\RR)$}\label{Sect SL geom}

The Lie group $\wt{SL_2}(\RR)$ is defined as the universal covering group of the simple Lie group $PSL_2(\RR)$. In this section we describe the group $\wt{SL_2}(\RR)$ by identifying it with the universal cover of the unit tangent bundle $U\HH^2$ on the hyperbolic plane. We start by identifying $PSL_2(\RR)$ with $U\HH^2$. Thus $\wt{SL_2}(\RR)$ is endowed with the structure of a topologically trivial $\RR$-fibration over the hyperbolic plane. This construction can be used to obtain a left invariant metric on $\wt{SL_2}(\RR)$ which makes it a 3-dimensional geometry, one of the eight geometries of Thurston's list. Furthermore, this allows us to view automorphic forms for discrete groups acting on the upper half plane as functions on $\wt{SL_2}(\RR)$. The fact that we pass to the universal cover allows the consideration of forms of fractional degrees which we need for our main construction.

\subsection{Isometric group actions on the hyperbolic plane}

We work with the upper half plane model of the hyperbolic plane
\begin{gather*}
\HH^2 = \{ z \in \CC : {\rm Im}(z)>0\} \; ,\quad
ds^2=\frac{\abs{dz}^2}{{\rm Im}(z)^2} \;.
\end{gather*}
The matrix Lie group
\begin{gather*}
SL_2(\RR):= \bsco{ A =
       \begin{pmatrix} a & b \\
                       c & d
       \end{pmatrix} \; :\; a,b,c,d \in \RR,\; ad-bc=1 }
\end{gather*}
acts isometrically on $\HH^2$ by M$\ddot{\rm o}$bius transformations:
\begin{gather*}
A.z :=\frac{az+b}{cz+d} \;.
\end{gather*}
This action is transitive. The stabilizer of the point $i\in \HH^2$ is $SO_2(\RR)$, isomorphic to the circle group ${\mathbb S}^1$. The center of $SL_2(\RR)$ consists of the elements $I,-I$, and acts trivially on $\HH^2$. To make the action effective, we take the quotient
$$
P: SL_2(\RR) \lw PSL_2(\RR)\cong SL_2(\RR)/\{\pm I\} \;.
$$
The action of $PSL_2(\RR)$ is effective and transitive. The stabilizer of the point $i$ is $PSO_2(\RR)$, also isomorphic to ${\mathbb S}^1$. The coset space $PSL_2(\RR)/PSO_2(\RR)$ is diffeomorphic to the hyperbolic plane, the stabilizer being simply transitive on the circle $U_i\HH^2$ of unit tangent vectors at the point $i$. Thus $PSL_2(\RR)$ is diffeomorphic to the unit tangent bundle on $\HH^2$:
\begin{gather*}
U\HH^2 := \{ (z,\vec{v}) : z\in \HH^2,\; \vec{v} \in T_z\HH^2 \; ,\; \norm{\vec{v}}_{\HH}=1 \} \;.
\end{gather*}
To fix a diffeomorphism we choose $(i,\frac{\pl}{\pl z}_{\vert i})\in U\HH^2$ to be a base point, and identify $PSL_2(\RR)$ with its orbit through that point. Since the hyperbolic plane is homeomorphic to a 2-cell, the fibre bundles $T\HH^2$ and $U\HH^2$ are topologically trivial. Hence $U\HH^2$ is diffeomorphic to $\HH^2 \times {\mathbb S}^1$ and the fundamental group $\pi_1(U\HH^2)\cong \pi_1(PSL_2(\RR))$ is an infinite cyclic group.

The Lie group $\wt{SL_2}(\RR)$ is defined as the universal covering group of $PSL_2(\RR)$. We denote the canonical projection by $\wt{P}:\wt{SL_2}(\RR)\lw PSL_2(\RR)$. The fundamental group of $PSL_2(\RR)$ is identified with the center $C$ of $\wt{SL_2}(\RR)$. Thus we have a central extension
\begin{gather}\label{For C->wt(SL)->PSL}
1 \lw C \lw \wt{SL_2}(\RR) \stackrel{\wt{P}}{\lw} PSL_2(\RR) \lw 1 \;.
\end{gather}
The group $\wt{SL_2}(\RR)$ acts, via $\wt{P}$, on $\HH^2$ and $U\HH^2$. Since $PSL_2(\RR)$ is identified with $U\HH^2$, it follows that $\wt{SL_2}(\RR)$ is identified with the universal covering $\wt{U\HH^2}$ (which is in turn diffeomorphic to $\HH^2\times\RR$). This endows $\wt{SL_2}(\RR)$ with the structure of a topologically trivial $\RR$-bundle over the hyperbolic plane. We fix the generator $c$ of $C$ corresponding to a simultaneous counter-clockwise rotation of all unit tangent vectors to $\HH^2$ by angle $2\pi$ keeping the base points fixed.

For any integer $r\geq 1$ we can consider the group $\wt{SL_2}(\RR)/<c^r>$, which is a $r$-fold covering of $PSL_2(\RR)$. Clearly $\wt{SL_2}(\RR)/<c^r>$ has cyclic center of order $r$ and is diffeomorphic to $\HH^2\times{\mathbb S}^1$.

\begin{remark}\label{Zab SL_2 Geometry}
The hyperbolic metric on $\HH^2$ can be used to induce a left invariant metric on $PSL_2(\RR)$ and from there on $\wt{SL_2}(\RR)$. This turns $\wt{SL_2}(\RR)$ into a model for a geometry, in the sense discussed in section \ref{Sect_GeoSrt}. The exact form of this metric is not important for the purposes of this paper, so we omit the precise formulation.
\end{remark}

\subsection{Automorphic forms for discrete subgroups of $\wt{SL_2}(\RR)$}\label{Sect Auto-forms}

If $G$ is any subgroup of $\wt{SL}_2(\RR)$, then $G\cap C \subset \; Z(G)$ and $\wt{P}(G)\cong G/(G\cap C)$. Clearly there are discrete subgroups of $\wt{SL_2}(\RR)$ which project to non-discrete subgroups of $PSL_2(\RR)$. On the other hand, if $\Gamma$ is a discrete subgroup of $PSL_2(\RR)$, i.e. a Fuchsian group, then its preimage $\wt{\Gamma}=\wt{P}^{-1}(\Gamma)$ is discrete in $\wt{SL_2}(\RR)$. We are mainly interested in discrete subgroups $\wt{\Gamma}$ of $\wt{SL_2}(\RR)$ arising as preimages of Fuchsian groups and, more generally, subgroups $G\subset\wt{\Gamma}$ which are obviously discrete as well, and project to Fuchsian groups.

Let $G$ be a discrete subgroup of $\wt{SL_2}(\RR)$. We assume that the projection $\wt{P}(G)\subset PSL_2(\RR)$ is a Fuchsian group of the first kind, i.e. has a fundamental domain with finite volume. In such a case, the quotient $\HH^2/\wt{P}(G)$ can be compactified (holomorphically) by adding a finite number of points. This section is devoted to the notion of an automorphic form for $G$. We consider forms of rational degrees as defined by Milnor in \cite{MilnorBrieskornMani}; this approach is somewhat non-standard, but suitable for our purposes.

Let $\wt{\CC^\times}$ denote the universal covering group of the multiplicative group $\CC^\times=\CC\setminus\{0\}$. As a complex Lie group $\wt{\CC^\times}$ is isomorphic to the additive group $\CC$, but we prefer to keep the multiplicative notation. Recall that for $\zeta\in\wt{\CC^\times}$ we have a well-defined $r$-th root $\zeta^{\frac{1}{r}}$ for any positive integer $r$, and hence we have a well-defined $k$-th power $\zeta^k$ for any rational number $k$.

Let $T'\HH^2$ denote the tangent bundle on the hyperbolic plane with removed zero-section. This bundle has a nonvanishing holomorphic section $\frac{\pl}{\pl z}$. Hence $T'\HH^2$ is holomorphically trivial, i.e. biholomorphic to $\HH^2\times\CC^\times$. Notice that $T'\HH^2$ contains the unit tangent bundle $U\HH^2$ considered earlier. The isomorphism $T'\HH^2\cong\HH^2\times\CC^\times$ associated with the section $\frac{\pl}{\pl z}$ is such that $U_z\HH^2$ is identified with $\{z\}\times\{\zeta\in\CC^\times:\abs{\zeta}={\rm Im}(z)\}$. The universal cover $\wt{T'\HH^2}$ is biholomorphic to $\HH^2\times\wt{\CC^\times}$. The group $\wt{SL_2}(\RR)$ acts on $\wt{T'\HH^2}$, and hence on $\HH^2\times\wt{\CC^\times}$, in an obvious way and all stabilizers are trivial. Thus any orbit of this action is diffeomorphic to $\wt{SL_2}(\RR)$. We fix the embedding $\wt{SL_2}(\RR) \subset \HH^2\times\wt{\CC^\times}$ given by the orbit map through the point $(i,1)\in\HH^2\times\wt{\CC^\times}$.

Let $\gamma\in\wt{SL_2}(\RR)$. The expression for the transformation of $\HH^2\times\wt{\CC^\times}$ associated with $\gamma$ is
$$
\gamma(z,w)=(\gamma(z),\gamma'(z)w) \;,
$$
where $\gamma(z)$ is given by the usual action of $\wt{SL_2}(\RR)$ on $\HH^2$ and $\gamma':\HH^2\lw\wt{\CC^\times}$ is the lift of the (nonvanishing) derivative $\frac{d\gamma(z)}{dz}$ determined by the requirement to satisfy the chain rule $(\gamma_2\gamma_1)'(z)=\gamma_2'(\gamma_1(z))\gamma_1'(z)$.

\begin{definition}
A (differential) {\it form} of degree $k\in\QQ$, or a $k$-form, on $\HH^2$ is defined to be a complex valued function $\omega$ of two variables $z\in \HH^2$ and $dz \in \wt{\CC^\times}$ of the form
$$
\omega(z,dz)=f(z)dz^k \;,
$$
where $f$ is a holomorphic function on $\HH^2$. The product of $f(z)$ and $dz^k$ is taken after projecting $dz^k$ from $\wt{\CC^\times}$ to $\CC^\times$ via the universal covering map.
\end{definition}

Let $\omega=f(z)dz^k$ be a form on $\HH^2$. For any $\gamma\in\wt{SL_2}(\RR)$ we have the pullback
$$
\gamma^*\omega(z,dz) = \omega(\gamma.(z,dz)) = f(\gamma(z))(\gamma'(z))^k dz^k \;.
$$

\begin{definition} Let $\omega(z,dz)=f(z)dz^k$ be a form on $\HH^2$. If $\chi\in{\rm Hom}(G,U(1))$ is a character of $G$, a form $\omega$ on $\HH^2$ is called $\chi$-{\it automorphic}, if the following two conditions are satisfied:

{\rm (i)} $\gamma^*\omega=\chi(\gamma)\omega$ for all $\gamma\in G$.

{\rm (ii)} Suppose $\wt{P}(G)$ has a fundamental domain with a cusp at $\infty$ (the case of a different cusp is treated by sending it to $\infty$). Let $z\lo z+\lambda$ be a generator of the parabolic subgroup of $G$ fixing $\infty$. Then the function $f(z)$ is required to have an expansion of the form
$$
f(z)=\sum\limits_{n=0}^\infty a_n e^{2\pi i n \frac{z}{\lambda}}
$$
which is convergent for $z\in\HH^2$. In such a case, $f$ is said to be holomorphic at $\infty$.

If $\chi$ is the trivial character of $G$, a $\chi$-automorphic form will be called $G$-{\it automorphic}; in such a case, we have $\gamma^*\omega=\omega$ for all $\gamma\in G$. When the group is understood from the context, by automorphic form we will mean a form which is $\chi$-automorphic for some character $\chi$. An automorphic form vanishing at a cusp of $\wt{P}(G)$ is called a {\it cusp form}.
\end{definition}

The automorphy property of a form $\omega(z,dz)=f(z)dz^k$ is reflected on a property of the function $f$. Namely,
$$
\gamma^*\omega=\chi(\gamma)\omega \quad \tst \quad f(\gamma(z))(\gamma'(z))^{k}=\chi(\gamma)f(z) \;.
$$

The space of $\chi$-automorphic $k$-forms will be denoted by ${\mc A}_G^{k,\chi}$. The algebra of forms on $\HH^2$ generated by all automorphic forms for $G$ will be denoted by ${\mc A}_G^{*,*}$. This is a bi-graded algebra with components ${\mc A}_G^{k,\chi}$. The $G$-automorphic forms generate a subalgebra, to be denoted by ${\mc A}_G^{*}$, with degree components ${\mc A}_G^{k}={\mc A}_G^{k,1}$. Since we have embedded $\wt{SL_2}(\RR)$ into $\HH^2\times\wt{\CC^\times}$, any form $\omega$ on $\HH^2$ can be viewed, via restriction, as a complex valued function on $\wt{SL_2}(\RR)$. The elements of ${\mc A}_G^*$ are then well-defined functions on the coset space $\wt{SL_2}(\RR)/G$.

\begin{lemma}\label{Lemma AutForm and Levels}
Let $r,s$ be co-prime integers, $r>0$, and let $\omega(z,dz)=f(z)dz^{\frac{s}{r}}$ be a form of degree $\frac{s}{r}$ on $\HH^2$.

{\rm (i)} The form $\omega$ is invariant under the action of the central subgroup $<c^r>\subset \wt{SL_2}(\RR)$, and hence is a well-defined function on the group $\wt{SL_2}(\RR)/<c^r>$.

{\rm (ii)} If $\omega$ is $G$-automorphic, then $G\cap C \subset <c^r>$.
\end{lemma}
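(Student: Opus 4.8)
The plan is to reduce both claims to one explicit fact: how the central subgroup $C$ acts on the $\wt{\CC^\times}$-factor of $\wt{T'\HH^2}\cong\HH^2\times\wt{\CC^\times}$. Once this is pinned down, each part becomes a one-line computation with the exponent $k=\frac{s}{r}$. I would write $\pi:\wt{\CC^\times}\lw\CC^\times$ for the covering homomorphism, so that $\ker\pi\cong\ZZ$ is the deck group, generated by the element $\zeta_0$ representing one positively-oriented loop. Identifying $\wt{\CC^\times}$ with the additive group $\CC$ via the exponential, $\zeta_0$ corresponds to $2\pi i$, whence $\pi(\zeta_0^{\,k})=e^{2\pi ik}$ for every $k\in\QQ$; in particular $\pi(\zeta_0^{\,n})=1$ precisely when $n\in\ZZ$.

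The key step is to identify the action of the central generator $c$. Since $\wt{P}(c)=1$, the element $c$ fixes $\HH^2$ pointwise, so $\frac{dc(z)}{dz}=1$ and therefore $c'(z)\in\ker\pi$ for every $z$; being a continuous $\ker\pi$-valued function on the connected space $\HH^2$, it is constant, and by the defining description of $c$ as the simultaneous counter-clockwise rotation of all unit tangent vectors by the angle $2\pi$ it equals exactly $\zeta_0$. Thus $c\cdot(z,w)=(z,\zeta_0 w)$, and by the chain rule $(\gamma_2\gamma_1)'(z)=\gamma_2'(\gamma_1(z))\gamma_1'(z)$ together with $c^n(z)=z$ one gets $(c^n)'(z)\equiv\zeta_0^{\,n}$, i.e. $c^n\cdot(z,w)=(z,\zeta_0^{\,n}w)$.

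Granting this, part (i) follows: writing the form as the function $\omega(z,w)=f(z)\,\pi(w^k)$, one computes
$$
(c^r)^*\omega(z,w)=\omega(z,\zeta_0^{\,r}w)=f(z)\,\pi\!\left(\zeta_0^{\,rk}\right)\pi(w^k)=e^{2\pi i rk}\,\omega(z,w),
$$
and since $rk=s\in\ZZ$ the factor $e^{2\pi irk}=1$, giving $(c^r)^*\omega=\omega$. Because $c^r$ is central, this invariance is equivalent to $\omega(\gamma c^r)=\omega(\gamma)$ for all $\gamma\in\wt{SL_2}(\RR)$, so $\omega$ descends to a function on $\wt{SL_2}(\RR)/\langle c^r\rangle$. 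For part (ii), let $c^m$ generate the cyclic subgroup $G\cap C$; the same computation gives $(c^m)^*\omega=e^{2\pi ims/r}\,\omega$, while $G$-automorphy forces $(c^m)^*\omega=\omega$. Since $\omega\not\equiv 0$, this means $e^{2\pi ims/r}=1$, i.e. $r\mid ms$; as $\gcd(r,s)=1$ we conclude $r\mid m$, hence $G\cap C=\langle c^m\rangle\subseteq\langle c^r\rangle$.

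The only genuine obstacle is the bookkeeping in the key step: one must verify that $c$, defined geometrically as the $2\pi$-rotation of tangent frames, really corresponds to multiplication by the distinguished generator $\zeta_0$ of $\ker\pi$ (and not some other power), and that rational powers interact with the projection through the formula $\pi(\zeta_0^{\,k})=e^{2\pi ik}$. Everything after that is formal, using only $rk=s\in\ZZ$ for (i) and the coprimality $\gcd(r,s)=1$ for (ii); notably the holomorphy of $f$ and the cusp condition play no role here.
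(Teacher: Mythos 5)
Your proof is correct and follows essentially the same route as the paper: both reduce everything to the computation $(c^n)^*\omega = e^{2\pi i n s/r}\,\omega$ (the paper states directly that $c'(z)^k$ projects to $e^{2\pi i k}$, which is exactly your key step) and then conclude via the coprimality of $r$ and $s$. Your explicit remarks that $c$ acts as multiplication by the distinguished deck generator and that $\omega\not\equiv 0$ is needed in (ii) are careful touches the paper leaves implicit, but they do not change the argument.
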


\begin{proof}
The generator $c$ of the center of $\wt{SL_2}(\RR)$ satisfies $c(z)=z$ and $c'(z)^k$ projects to $e^{2\pi i k}\in\CC^\times$ for all $z\in\HH^2$ and $k\in\QQ$. Hence, for $n\in\NN$, we have
$$
(c^n)^*\omega(z,dz) = f(c^n(z))c'(z)^{n\frac{s}{r}}dz^{\frac{s}{r}} = f(z)e^{2\pi i \frac{ns}{r}}dz^\frac{s}{r}=e^{2\pi i \frac{ns}{r}}\omega(z,dz) \;.
$$
Now, since $s$ and $r$ are assumed co-prime, we see that $(c^n)^*\omega=\omega$ if and only if $r$ divides $n$. Both parts of the lemma follow immediately.
\end{proof}

The following lemma is taken form \cite{MilnorBrieskornMani}; it describes roots of automorphic forms.

\begin{lemma}\label{Lemma Roots of forms}
Let $\omega(z,dz)=f(z)dz^k$ be a $\chi$-automorphic form. Suppose that $f$ possesses an $n$-root, i.e. there is a holomorphic function $f_1$ on $\HH^2$ such that $f_1(z)^n=f(z)$. Then the form $\omega_1(z,dz)=f_1(z)dz^{\frac{k}{n}}$ is $\chi_1$-automorphic for some character $\chi_1$ of $G$ satisfying $\chi_1^n=\chi$.
\end{lemma}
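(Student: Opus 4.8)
The plan is to verify directly the transformation law (i) for $\omega_1$ and then to check that the resulting function $\chi_1$ on $G$ is a character with $\chi_1^n=\chi$. We may assume $\omega\neq 0$, so that $f$, and hence $f_1$, is not identically zero. First I would fix $\gamma\in G$ and compare the two holomorphic functions $F_\gamma(z):=f_1(\gamma(z))(\gamma'(z))^{\frac{k}{n}}$ and $f_1(z)$ on $\HH^2$; here $(\gamma'(z))^{\frac{k}{n}}$ is a well-defined, nowhere-vanishing holomorphic function, since $\gamma':\HH^2\lw\wt{\CC^\times}$ is nonvanishing and rational powers are defined on $\wt{\CC^\times}$. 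Raising $F_\gamma$ to the $n$-th power and using $f_1^n=f$ together with the automorphy of $\omega$ gives
$$
F_\gamma(z)^n=f_1(\gamma(z))^n(\gamma'(z))^k=f(\gamma(z))(\gamma'(z))^k=\chi(\gamma)f(z)=\chi(\gamma)f_1(z)^n \;.
$$

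The key step is to promote this $n$-th-power identity to an honest proportionality $F_\gamma=\chi_1(\gamma)f_1$. On the open set $\HH^2\setminus Z(f_1)$, where $Z(f_1)$ is the (discrete) zero locus of $f_1$, the ratio $h_\gamma:=F_\gamma/f_1$ is holomorphic and satisfies $h_\gamma^n\equiv\chi(\gamma)$. Hence $h_\gamma$ takes values in the finite set of $n$-th roots of $\chi(\gamma)$. Since removing a discrete set from $\HH^2$ leaves a connected open set, a continuous map into a discrete set is constant, so $h_\gamma\equiv\chi_1(\gamma)$ for some constant $\chi_1(\gamma)$ with $\chi_1(\gamma)^n=\chi(\gamma)$; as $\chi(\gamma)\in U(1)$ this forces $\abs{\chi_1(\gamma)}=1$. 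The identity $F_\gamma=\chi_1(\gamma)f_1$ then holds on $\HH^2\setminus Z(f_1)$ and, both sides being holomorphic and agreeing on a dense open set, on all of $\HH^2$. This is exactly the relation $f_1(\gamma(z))(\gamma'(z))^{\frac{k}{n}}=\chi_1(\gamma)f_1(z)$, i.e. condition (i) for $\omega_1$.

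Finally I would check that $\chi_1:G\lw U(1)$ is a homomorphism. The essential inputs are the chain rule $(\gamma_2\gamma_1)'(z)=\gamma_2'(\gamma_1(z))\gamma_1'(z)$ and the fact that $\zeta\mapsto\zeta^{\frac{k}{n}}$ is a group homomorphism of $\wt{\CC^\times}$ (which holds because $\wt{\CC^\times}\cong(\CC,+)$). Applying the law (i) for $\gamma_2$ at the point $\gamma_1(z)$ and then for $\gamma_1$ at $z$ yields $\chi_1(\gamma_2\gamma_1)f_1(z)=\chi_1(\gamma_2)\chi_1(\gamma_1)f_1(z)$, whence $\chi_1(\gamma_2\gamma_1)=\chi_1(\gamma_2)\chi_1(\gamma_1)$. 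For the cusp condition (ii) I would observe that $f_1$ is holomorphic on $\HH^2$ and, by (i) applied to the parabolic generator $z\mapsto z+\lambda$, transforms by a unit-modulus constant; together with $f_1^n=f$ and the holomorphy of $f$ at $\infty$ this forces $f_1$ to admit a Fourier expansion with nonnegative frequencies, so $\omega_1$ is holomorphic at the cusp. The main obstacle is precisely the constancy step: one must rule out the a priori possibility that $F_\gamma/f_1$ jumps between different $n$-th roots of $\chi(\gamma)$, which is where connectedness of $\HH^2\setminus Z(f_1)$ and the discreteness of the root set are indispensable.
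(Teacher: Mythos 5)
Your proof is correct and follows essentially the same route as the paper: form the ratio of the transformed and original forms, observe that its $n$-th power is the constant $\chi(\gamma)$, and conclude constancy. The paper states the constancy step in one line for the meromorphic ratio $\gamma^*\omega_1/\omega_1$, while you spell out the details (discreteness of the root set plus connectedness of $\HH^2\setminus Z(f_1)$) and additionally verify the homomorphism property and the cusp condition, which the paper leaves implicit.
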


\begin{proof}
Let $\gamma\in G$. Then $\omega_1$ and $\gamma^*\omega_1$ are both forms of degree $\frac{k}{n}$ on $\HH^2$. Hence the quotient $\frac{\gamma^*\omega_1}{\omega_1}$ is a well-defined meromorphic function on $\HH^2$. The $n$-th power of this function is $\frac{\gamma^*\omega}{\omega}=\chi(\gamma)$ which is constant. Therefore $\frac{\gamma^*\omega_1}{\omega_1}$ is constant as well. Define $\chi_1(\gamma)=\frac{\gamma^*\omega_1}{\omega_1}$. Then $\chi_1$ is a character of $G$ and we have $\chi_1^n=\chi$.
\end{proof}

\section{Triangle groups and torus knot groups}\label{Sect TriGr & TKGr}

\subsection{Triangle groups}\label{Sect Def Triang Groups}

In this section, we discuss a beautiful class or Fuchsian groups - the triangle groups. They have been extensively studied, a basic introduction may be found in Beardon's book \cite{Beardon}.

For any two distinct points $a$ and $b$ in $\HH^2$ denote by $\ol{ab}$ the directed geodesic segment connecting $a$ and $b$. The same notation will be used for the limit case, when one or both points lie on the absolute. Consider a geodesic triangle $\Delta$ in $\HH^2$, with vertices $a,b,c$ (some of them may lie on the absolute). To fix the notation, suppose that the triple $a,b,c$ is positively oriented. Suppose that the interior angles of $\Delta$, at the vertices $a,b,c$ respectively, are equal to $\frac{\pi}{p}, \frac{\pi}{q}, \frac{\pi}{r}$, with $p,q,r \in \NN\cup\{\infty\}$ \footnote{We assume that $\frac{\pi}{\infty}=0$ for a vertex on the absolute. To obtain a hyperbolic triangle, one must have $\frac{1}{p}+\frac{1}{q}+\frac{1}{r}<1$. This is certainly true when $1<p<q,\;r=\infty$.}. Denote by $\sigma_a,\sigma_b,\sigma_c$ the reflections in $\HH^2$ with respect to $\ol{bc},\ol{ac},\ol{ab}$. The triangle $\Delta$ is a fundamental polygon for a discrete group of isometries of $\HH^2$, with 
presentation
\begin{align*}
\Gamma^*(p,q,r)\cong \; <\sigma_a,\sigma_b,\sigma_c\; ;\;&
\sigma_a^2=\sigma_b^2=\sigma_c^2=1 \;, \\
& (\sigma_b\sigma_c)^p= (\sigma_c\sigma_a)^q=(\sigma_a\sigma_b)^r=1>
\end{align*}
Such a group is called a $(p,q,r)$-{\it triangle group}. The elements $\alpha_0:=\sigma_b\sigma_c$, $\beta_0:=\sigma_c\sigma_a$, $\gamma_0:=\sigma_a\sigma_b$ are elliptic or parabolic \footnote{In the case when some of the parameters $p,q,r$ equal $\infty$, for instance $r=\infty$, the corresponding element $\gamma_0=\sigma_a\sigma_b$ is parabolic. The relation $\gamma_0^r=1$ is omitted.} elements of $PSL_2(\RR)$, with fixed points $a,b,c$ and angles of rotation $\frac{2\pi}{p},\frac{2\pi}{q},\frac{2\pi}{r}$ respectively. We have $\gamma_0=\beta_0^{-1}\alpha_0^{-1}$.

If $\Delta_1$ is another triangle in $\HH^2$, similar (same angles) to $\Delta$, then there exists an isometry of $\HH^2$ sending $\Delta$ to $\Delta_1$. Thus any two $(p,q,r)$-triangle groups are conjugate in ${\rm Isom}(\HH^2)$. The generators of $\Gamma^*(p,q,r)$ do not preserve the orientation in $\HH^2$, so this group is not Fuchsian.

To obtain a Fuchsian group we take the subgroup of index 2 of $\Gamma^*(p,q,r)$ consisting of all orientation preserving elements. We denote this group by $\Gamma(p,q,r)$ and, following the tradition, also refer to it as a $(p,q,r)$-{\it triangle group}; the distinction should be easily made from the context. We are mostly concerned with the orientation preserving triangle groups. To obtain a fundamental domain for $\Gamma(p,q,r)$, denote $\Delta':=\sigma_c(\Delta),\; d:=\sigma_c(c)$, and set
\begin{gather*}
D:=\Delta \cup \Delta' \;.
\end{gather*}
Then $D$ is a quadrangle in $\HH^2$ with vertices $a,d,b,c$ and angles $\frac{2\pi}{p},\frac{\pi}{r},\frac{2\pi}{q},\frac{\pi}{r}$ respectively. The edges of $D$ are identified by $\alpha_0$ and $\beta_0$ as follows
\begin{gather}\label{Action Gue H}
\begin{array}{l}
\alpha_0:\ol{ad}\lo\ol{ac} \\
\beta_0:\ol{bc}\lo\ol{bd} \;.
\end{array}
\end{gather}
According to Poincar${\rm \acute{e}}$'s theorem for fundamental polygons, $\alpha_0$ and $\beta_0$ generate a Fuchsian group with fundamental domain $D$ and presentation
\begin{gather}\label{Quad group p,r,q,r}
\Gamma(p,q,r)\cong \; <\alpha_0,\beta_0 \; ;\;\alpha_0^p=1,\beta_0^q=1,(\alpha_0\beta_0)^r=1> \;.
\end{gather}
It is easy to see that $\sigma_b(\Delta)=\alpha_0(\Delta')$ and $\sigma_a(\Delta)=\beta_0^{-1}(\Delta')$, thus
$D_1:=\Delta\cup\sigma_b(\Delta)$ and $D_2:=\Delta\cup\sigma_a(\Delta)$ are alternative fundamental domains for $\Gamma(p,q,r)$. The independence on the choice of reflection generating the factorgroup is obvious from the definition, and also in view of the presentation
\begin{gather}\label{Present Gamma(p,r,q,r)}
\Gamma(p,q,r)\cong \; <\alpha_0,\beta_0,\gamma_0 \; ;\; \alpha_0^p=1,\beta_0^q=1,\gamma_0^r=1,\alpha_0\beta_0\gamma_0=1> \;.
\end{gather}

A classical example is the modular group $PSL_2(\ZZ)$. Its traditional fundamental domain and generators are
\begin{gather*}
D_1(PSL_2(\ZZ))=\bsco{x+iy\in \HH^2 \; :\; -\frac{1}{2}<x<\frac{1}{2}\; ,\; y>\sqrt{1-x^2}} \;, \\
\; \\
P(S),\; P(T) \;\; ,\;\;{\rm where}\;\;
S=\begin{pmatrix} 0 & -1 \\
                  1 & 0
  \end{pmatrix} \quad, \quad
T=\begin{pmatrix} 1 & 1 \\
                  0 & 1
  \end{pmatrix} \;.
\end{gather*}
The relations are $S^2=1$ and $(ST^{-1})^3=1$. This group is Fuchsian, it preserves the orientation of $\HH^2$. In the notation introduced above, $PSL_2(\ZZ)\cong \Gamma(2,3,\infty)$ is a triangle group. It is a subgroup of index 2 of the triangle group $\Gamma^*(2,3,\infty)$ with fundamental domain
\begin{gather*}
\Delta(PSL_2(\ZZ))=\bsco{x+iy\in \HH^2\; :\; 0<x<\frac{1}{2}\; ,\; y>\sqrt{1-x^2}} \;.
\end{gather*}
Alternative fundamental domain and generators for $PSL_2(\ZZ)$ are
\begin{gather*}
D(PSL_2(\ZZ))=\bsco{x+iy\in \HH^2 \; :\; 0<x<\frac{1}{2}\; , \;
y>\sqrt{1-(x-1)^2}} \;, \\
\; \\
P(S) , \; P(U) \;\;,\;\;{\rm where}\;\; U=ST^{-1} \;.
\end{gather*}
Hence, if we denote $\alpha_0:=P(S),\; \beta_0:=P(U)$, we have
\begin{gather*}
PSL_2(\ZZ) \cong \; <\alpha_0,\beta_0 \; ;\; \alpha_0^2=1,\beta_0^3=1> \; \cong \Gamma(2,3,\infty) \;.
\end{gather*}

We now focus on the class of triangle groups which actually concern us for the rest of the paper. Let $p$ and $q$ be co-prime numbers, and set
\begin{gather}\label{Quad group p,inf,q,inf}
\Gamma_{p,q}:=\Gamma(p,q,\infty) \cong \; <\alpha_0,\beta_0\; ;\; \alpha_0^p=1,\beta_0^q=1> \;.
\end{gather}
Abstractly, $\Gamma_{p,q}$ is isomorphic to the free product $\ZZ_p * \ZZ_q$. The triangle $\Delta$ has exactly one vertex, say $c$, lying on the absolute. The fundamental domain $D$ corresponding to the above presentation is a quadrangle with two cusps $c$ and $d$, and two vertices $a$ and $b$ inside $\HH^2$ with angles $\frac{2\pi}{p}$ and $\frac{2\pi}{q}$ respectively. We can take
\begin{gather}\label{Javni formuli Gamma(p,q)}
\begin{array}{c}
a=-\cos \frac{\pi}{p}+i\sin \frac{\pi}{p}, \quad d=0,\quad
b=\cos \frac{\pi}{q}+i\sin \frac{\pi}{q}, \quad c=\infty \\
\; \\
D = \left\{ x+iy \in \HH^2 \; :\quad
\begin{matrix}
-\cos \frac{\pi}{p}<x<\cos \frac{\pi}{q} \qquad \qquad \qquad \\
\; \\
y>\sqrt{\sco{\frac{1}{2\cos\frac{\pi}{p}}}^2-\sco{x+\frac{1}{2\cos\frac{\pi}{p}}}^2} \\
y>\sqrt{\sco{\frac{1}{2\cos\frac{\pi}{q}}}^2-\sco{x-\frac{1}{2\cos\frac{\pi}{q}}}^2}
\end{matrix}
\quad\right\} \;, \\
\; \\
\alpha_0=P(A_0) ,\; \beta_0=P(B_0) ,\quad
A_0=\begin{pmatrix} 2\cos \frac{\pi}{p} & 1 \\
                    -1 & 0
    \end{pmatrix} , \quad
B_0=\begin{pmatrix} 0 & 1 \\
                    -1 & 2\cos \frac{\pi}{q}
    \end{pmatrix} \;.
\end{array}
\end{gather}
The action of the generators is given in (\ref{Action Gue H}). The two cusps are identified by the generators.

\begin{remark}\label{zab AltFundDom Gamma(p,q) D_1}
An alternative fundamental domain with one cusp for $\Gamma_{p,q}$
is
\begin{gather*}
D_1 \; := \; \Delta \cup \sigma_b(\Delta) \; = \; (D \cap
\alpha_0(D))\cup \alpha_0(D)
\end{gather*}
It corresponds to the following generators and relations
\begin{gather*}
\Gamma_{p,q}\cong\; <\alpha_0,\gamma_0 \; ;\; \alpha_0^p=1 \; , \; (\gamma_0\alpha_0)^q=1 > \;.
\end{gather*}
The element $\gamma_0=(\alpha_0\beta_0)^{-1}$ is parabolic with fixed point $\infty$.
\end{remark}

The quotient $\HH^2/\Gamma_{p,q}$ is an orbifold with underlying topological space ${\mathbb S}^2\setminus\{point\}$. The missing point corresponds to the cusp of the fundamental domain $D_1$. This orbifold is an important ingredient in our construction, it was already encountered as the base orbifold $\Theta$ of the Seifert fibration in a torus knot complement, see Proposition \ref{Prop SeifFib TorKnot}.

\begin{proposition}\label{Prop H->Theta}
The projection $\HH^2\lw\HH^2/\Gamma_{p,q}$ is the universal covering of the orbifold $\Theta$, i.e. $\Theta\cong\HH^2/\Gamma_{p,q}$.
\end{proposition}

\begin{proof}
The generators of $\Gamma_{p,q}$ give the following identifications of edges of $D$:
\begin{gather*}
\alpha_0:\ol{ad}\lo\ol{ac} \quad , \quad \beta_0:\ol{bc}\lo\ol{bd} \;.
\end{gather*}
The diagonal $\ol{cd}$ cuts $D$ in two triangles, say $D_a$ and $D_b$, containing the vertices $a$ and $b$ respectively. These triangles are mapped under the action of $\alpha_0$ and $\beta_0$ on two cones $D^2/\ZZ_p$ and $D^2/\ZZ_q$. These two cones are the base orbifolds of the fibred tori $\TT(q,p)$ and $\TT(p,q)$ given in (\ref{The TrivTori of S-K}). The projection of the annulus $T\setminus K$, from formula (\ref{The TrivTori of S-K}), is the diagonal $\ol{cd}$. The cusps $c$ and $d$ are identified by both $\alpha_0$ and $\beta_0$, and correspond to the knot $K$.
\end{proof}

\subsection{The commutator subgroup of a $(p,q,\infty)$-triangle group}

Let $\Gamma_{p,q}'$ denote the commutator subgroup of $\Gamma_{p,q}$. The main result in this section states that $\Gamma_{p,q}'$ is a free group of rank $(p-1)(q-1)$. This result is somewhat tangential to the main line of the paper, and in fact can be deduced easily from well known properties of torus knot groups, cf \cite{BurdeZieschang}. However, the proof we give here is somewhat interesting in that it uses the geometry of triangle groups. In particular we construct fundamental domain for the action of $\Gamma_{p,q}'$ on $\HH^2$ and give free generators. The freeness of the commutator subgroup of a torus knot group can be deduced from this result.

Since $\Gamma_{p,q}$ is isomorphic to the free product $\ZZ_p*\ZZ_q$, it follows that the factorgroup $\Gamma_{p,q}/\Gamma_{p,q}'$ is isomorphic to the direct product $\ZZ_p\times\ZZ_q$. Since $p$ and $q$ are co-prime, we can conclude that $\Gamma_{p,q}/\Gamma_{p,q}'$ is cyclic of order $pq$. For $\gamma\in\Gamma_{p,q}$ let $\ol{\gamma}\in\Gamma_{p,q}/\Gamma_{p,q}'$ denote the image of the canonical projection.

\begin{remark}\label{Zab Gamma/Gamma' = Zpq}

{\rm (i)} If $\Gamma_1$ is a normal subgroup of $\Gamma_{p,q}$ such that $\Gamma_{p,q}/\Gamma_1$ is cyclic of order $pq$, then $\Gamma_1$ coincides with the commutator $\Gamma_{p,q}'$.

{\rm (ii)} The factorgroup $\Gamma_{p,q}/\Gamma_{p,q}'$ consists of the elements $\ol{\alpha}_0^i\ol{\beta}_0^j$, $i=0,...,p-1$, $j=0,...,q-1$.

{\rm (iii)} Each of the two elements below generates $\Gamma_{p,q}/\Gamma_{p,q}'$
\begin{gather*}
\ol{\gamma}_0=(\ol{\alpha}_0\ol{\beta}_0)^{-1} \quad,\quad \ol{x}_0=\ol{\alpha}_0^{q_1}\ol{\beta}_0^{p_1} \;,
\end{gather*}
where $p_1,q_1$ are integers such that $pp_1+qq_1=1$. We have $\ol{\gamma}_0^{-pp_1^2-qq_1^2}=\ol{x}_0$ and $\ol{x}_0^{pq-p-q}=\ol{\gamma}_0$.
\end{remark}

\begin{proposition}\label{Prop Comut Quad Group}
The commutator subgroup $\Gamma_{p,q}'$ is free of rank $(p-1)(q-1)$. Any of the following two sets generates $\Gamma_{p,q}'$ freely.
\begin{gather}\label{SetGen Gamma' xi}
\xi_{0(i,j)} := [\alpha_{0}^i,\beta_{0}^j] =
\alpha_0^i\beta_0^j\alpha_0^{-i}\beta_0^{-j} \quad , \;
i=1,...,p-1,\; j=1,...,q-1 \;, \qquad
\end{gather}
\begin{gather}\label{SetGen Gamma' eta}
\quad \; \eta_{0(k,j)} :=
[\alpha_0,\alpha_0^k\beta_0^j\alpha_0^{-k}] =
\alpha_0^k[\alpha_0,\beta_0^j]\alpha_0^{-k} \; , \;
k=0,...,p-2,\; j=1,...,q-1 .
\end{gather}
\end{proposition}

\begin{proof}
The commutator $\Gamma_{p,q}'$ is the smallest normal subgroup of $\Gamma_{p,q}$ containing $[\alpha_0,\beta_0]$. Denote by $\Xi$ the subgroup of $\Gamma_{p,q}$ generated by the elements $\xi_{0(i,j)}$. The inclusion $\Xi \subset \Gamma_{p,q}'$ is clear. To prove that $\Gamma_{p,q}' \subset \Xi$, we will show that $\Xi$ is a normal subgroup of $\Gamma_{p,q}$. It is sufficient to see that the conjugates of $\xi_{0(i,j)}$ by $\alpha_0$ and $\beta_0$ are still in $\Xi$. Direct computations show that
\begin{gather}\label{F-la alpha^-1 xi alpha}
\alpha_0 \xi_{0(i,j)}\alpha_0^{-1} = \xi_{0(i+1,j)}\xi_{0(1,j)}^{-1} \quad,\quad
\beta_0 \xi_{0(i,j)}\beta_0^{-1} = \xi_{0(i,1)}^{-1}\xi_{0(i,j+1)} \;.
\end{gather}
Hence, $\Xi$ is a normal in $\Gamma_{p,q}$ and $\Xi \equiv \Gamma_{p,q}'$.

The elements (\ref{SetGen Gamma' xi}) are expressed in terms of (\ref{SetGen Gamma' eta}) in the following way. First we have
\begin{gather*}
\eta_{0(0,j)}=\xi_{0(1,j)} \quad , \quad j=1,...,q-1 \;.
\end{gather*}
Using (\ref{F-la alpha^-1 xi alpha}) and induction on $i$, it is easy to show that
\begin{gather*}
\eta_{0(i-1,j)}\eta_{0(i-2,j)}...\eta_{0(1,j)}\eta_{0(0,j)}=\xi_{0(i,j)}
\quad ,\quad i=1,...,p-1,\; j=1,...,q-1 \;.
\end{gather*}
Hence, the elements $\eta_{0(i,j)}$ also generate $\Gamma_{p,q}'$.

To obtain a fundamental domain for $\Gamma_{p,q}'$ one may take the union of the images of the fundamental domain $D$ of $\Gamma_{p,q}$ under the action of the elements representing the factorgroup $\Gamma_{p,q}/\Gamma_{p,q}'$. We denote
\begin{gather}\label{FundDomainF A}
D' := \bigcup\limits_{i,j} D_{i,j} \quad ,\;\;{\rm where}\;\; D_{i,j} := \alpha_0^i\beta_0^j(D) \; ,\;\; i=0,...,p-1,\;
j=0,...,q-1 \;.
\end{gather}
$D'$ is a polygon in $\HH^2$ with $2p(q-1)$ vertices, of which $p(q-1)$ cusps and $p(q-1)$ points inside $\HH^2$, changing alternatively. The interior angles at the vertices inside $\HH^2$ are all equal to $\frac{2\pi}{p}$. The edges of $D'$ are
\begin{gather*}
\begin{array}{ll}
l_{i,j}' := \alpha_0^i\beta_0^j(\ol{ac}) & \quad , \quad i=0,...,p-1 \;, \\
l_{i,j}'' := \alpha_0^i\beta_0^j(\ol{ad}) & \quad ,\quad j=1,...,q-1 \;.
\end{array}
\end{gather*}
The arcs $\alpha_0^i(\ol{ad})$ and $\alpha_0^i(\ol{ac})$ lie in the interior of $D'$. A direct computation shows that
\begin{gather*}
\eta_{0(i,j)} : l_{i,j}' \lo l_{i+1,j}'' \quad,\quad i \in \ZZ(mod \; p) \;,\; j = 1,...,q-1 \;.
\end{gather*}
According to Poincar$\acute{\rm e}$'s theorem, $D'$ is a fundamental polygon for a Fuchsian group with generators
\begin{gather*}
\eta_{0(i,j)} \quad ,\quad i=0,...,p-1,\; j=1,...,q-1
\end{gather*}
and relations
\begin{gather*}
\eta_{0(p-1,j)}\eta_{0(p-2,j)}...\eta_{0(1,j)}\eta_{0(0,j)}=1 \quad,\quad j=1,...,q-1 \;.
\end{gather*}
These relations correspond to the cycles of vertices of $D'$ (inside $\HH^2$). At every cycle of vertices the sum of the angles is equal to $p\frac{2\pi}{p}=2\pi$. Hence, the group acts freely on $\HH^2$ and the quotient is a smooth surface. Moreover, we can express $\eta_{0(p-1,j)}$ from the relations, and we are left with a free group of rank $(p-1)(q-1)$ generated by the elements
\begin{gather*}
\eta_{0(i,j)} \quad ,\quad i=0,...,p-2,\; j=1,...,q-1 \;.
\end{gather*}
This completes the proof.
\end{proof}

\begin{remark}
Proposition \ref{Prop Comut Quad Group} implies that the quotient $F:=\HH^2/\Gamma_{p,q}'$ is an orientable smooth open surface, whose homotopy type is a bouquet of $(p-1)(q-1)$ circles. In fact, $F$ can be embedded in ${\mathbb S}^3$ as the interior of a Seifert surface for a (p,q)-torus knot.
\end{remark}

Since $p$ and $q$ are co-prime, all the cusps of the fundamental polygon $D'$ are equivalent under the action of $\Gamma_{p,q}'$. To show this in a simple way we shall construct another fundamental domain $D_1'$ for $\Gamma_{p,q}'$ with only one cusp. We use the fundamental domain $D_1$ for $\Gamma_{p,q}$ given in remark \ref{zab AltFundDom Gamma(p,q) D_1}. In remark \ref{Zab Gamma/Gamma' = Zpq} we observed that the image of the element $\gamma_0=(\alpha_0\beta_0)^{-1}$ generates $\Gamma_{p,q}/\Gamma_{p,q}'\cong \ZZ_{pq}$. Thus
$$
D_1' \; := \; \bigcup\limits_{k=0}^{pq-1} \gamma_0^{k}(D_1)
$$
is a fundamental polygon for $\Gamma_{p,q}'$. The only cusp of $D_1$, the point $\infty$, is the fixed point of the element $\gamma_0$. Hence the polygon $D_1'$ has only one cusp, $\infty$. The two infinite edges of $D_1'$ are identified by the element $\gamma_0^{pq} = (\alpha_0\beta_0)^{-pq}$ which belongs to $\Gamma_{p,q}'$.

\subsection{A representation of the torus knot group in $\wt{SL_2}(\RR)$}\label{Sect The Group G}

The following lemma gives a representation of the torus knot group
$G_K:=\pi_1({\mathbb S}^3\setminus K)$ as a discrete subgroup of
$\wt{SL_2}(\RR)$.

\begin{lemma}\label{Lemma CenExt(G)->CenEct(SL)}
Let $\Gamma_{p,q}$ be the triangle Fuchsian group described in (\ref{Quad group p,inf,q,inf}). The pre-image
$\wt{\Gamma}:=\wt{P}^{-1}(\Gamma_{p,q})$ in $\wt{SL_2}(\RR)$ is a discrete subgroup, isomorphic to the torus knot group presented in (\ref{TheKnotGroup}). There is a commutative diagram
\begin{gather*}
\begin{array}{ccccccccc}
1 & \lw & Z(\wt{\Gamma}) & \lw & \wt{\Gamma} & \lw & \Gamma_{p,q} & \lw & 1 \\
  & & \downarrow & & \downarrow & & \downarrow & & \\
1 & \lw & C &\lw &\wt{SL_2}(\RR) &\lw &PSL_2(\RR) &\lw &1
\end{array}
\end{gather*}
where the rows are central extensions, the leftmost downarrow is an isomorphism and the other two
- monomorphisms.
\end{lemma}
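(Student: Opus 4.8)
The plan is to restrict the bottom central extension (\ref{For C->wt(SL)->PSL}) to the preimage $\wt{\Gamma}$ and then identify the resulting extension with the one defining the torus knot group $G_{p,q}$ in (\ref{TheKnotGroup}). Discreteness of $\wt{\Gamma}=\wt{P}^{-1}(\Gamma_{p,q})$ is immediate, since $\wt{P}$ is a covering map and $\Gamma_{p,q}$ is discrete, as already noted in Section~\ref{Sect Auto-forms}. Because $\wt{P}(C)=\{1\}\subset\Gamma_{p,q}$, the center $C$ of $\wt{SL_2}(\RR)$ lies in $\wt{\Gamma}$, so restricting the bottom row yields a central extension
\[
1 \lw C \lw \wt{\Gamma} \stackrel{\wt{P}}{\lw} \Gamma_{p,q} \lw 1 \;.
\]
The vertical maps of the claimed diagram are then the inclusions $\wt{\Gamma}\hookrightarrow\wt{SL_2}(\RR)$ and $\Gamma_{p,q}\hookrightarrow PSL_2(\RR)$, which are monomorphisms, together with the identity $C\to C$; commutativity holds by construction. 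It remains to show $C=Z(\wt{\Gamma})$ and $\wt{\Gamma}\cong G_{p,q}$.

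For the center, $C\subseteq Z(\wt{\Gamma})$ because $C$ is central in the ambient group. Conversely, any $g\in Z(\wt{\Gamma})$ projects to a central element of $\Gamma_{p,q}\cong\ZZ_p*\ZZ_q$; a free product of two nontrivial groups has trivial center, so $\wt{P}(g)=1$ and $g\in C$. Hence $Z(\wt{\Gamma})=C$, which confirms that the top-left term is $Z(\wt{\Gamma})$ as displayed and that the leftmost vertical arrow is an isomorphism.

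The core of the argument is the identification $\wt{\Gamma}\cong G_{p,q}$. I would lift the generators $\alpha_0,\beta_0$ of $\Gamma_{p,q}$ to the geometric lifts $\tilde{\alpha},\tilde{\beta}\in\wt{\Gamma}$ determined by counter-clockwise rotation of tangent vectors by $2\pi/p$ and $2\pi/q$, and establish the key relation
\[
\tilde{\alpha}^{\,p} = \tilde{\beta}^{\,q} = c \;.
\]
This is the geometric heart of the matter: the stabilizer in $PSL_2(\RR)$ of the fixed point of $\alpha_0$ is a circle whose single counter-clockwise loop rotates tangent vectors by $2\pi$ and hence represents the generator $c$ of $C$; since $\alpha_0$ traverses the fraction $1/p$ of this loop, its geometric lift satisfies $\tilde{\alpha}^{\,p}=c$, and likewise $\tilde{\beta}^{\,q}=c$. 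The two powers give the \emph{same} element $c$ precisely because both rotations are counter-clockwise, the triangle $a,b,c$ being positively oriented, which matches the torus knot relation $S_1^p=S_2^q$. Granting this, $\tilde{\alpha},\tilde{\beta}$ generate $\wt{\Gamma}$ (they surject onto $\Gamma_{p,q}$ and recover $c=\tilde{\alpha}^{\,p}\in C$), so $S_1\mapsto\tilde{\alpha}$, $S_2\mapsto\tilde{\beta}$ defines a surjection $\phi:G_{p,q}\to\wt{\Gamma}$, well defined since $S_1^p=S_2^q\mapsto\tilde{\alpha}^{\,p}=\tilde{\beta}^{\,q}$. The central element $S=S_1^p$ maps isomorphically onto $C=\langle c\rangle$, and $\phi$ induces the identity on the quotients $\ZZ_p*\ZZ_q$; by the five lemma applied to the two central extensions, $\phi$ is an isomorphism. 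Equivalently, one reads the presentation of the extension off that of $\Gamma_{p,q}$ in (\ref{Quad group p,inf,q,inf}) together with the relations $\tilde{\alpha}^{\,p}=\tilde{\beta}^{\,q}=c$, recovering $\langle\tilde{\alpha},\tilde{\beta}\,;\,\tilde{\alpha}^{\,p}=\tilde{\beta}^{\,q}\rangle$.

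The main obstacle I anticipate is the rotation-number computation $\tilde{\alpha}^{\,p}=\tilde{\beta}^{\,q}=c$: one must pin down the precise relationship between the hyperbolic rotation angle of an elliptic element, the induced rotation of tangent vectors, and the resulting translation in the universal cover $\wt{SL_2}(\RR)\cong\wt{U\HH^2}$, including the correct sign coming from the orientation of the triangle. Once this is settled, the remaining algebra (the five-lemma comparison of central extensions) is routine.
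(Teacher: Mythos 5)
Your proposal is correct and follows essentially the same route as the paper: both identify the geometric lifts $\alpha,\beta$ with $\alpha^p=\beta^q=c$ via the rotation-number observation, deduce $Z(\wt{\Gamma})=C$ from the triviality of the center of $\ZZ_p*\ZZ_q$, and read off the presentation $\langle\alpha,\beta\,;\,\alpha^p=\beta^q\rangle$. Your five-lemma comparison of the two central extensions is a slightly more formal way of justifying the paper's remark that $\alpha^p=\beta^q$ is ``essentially the only relation,'' but it is the same argument in substance.
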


\begin{proof}
The group $\Gamma_{p,q}$ is generated by the elliptic elements $\alpha_0$ and $\beta_0$. The angles of rotation are
$\frac{2\pi}{p}$ and $\frac{2\pi}{q}$. The elements $\alpha_0^p$ and $\beta_0^q$ represent rotations by angle $2\pi$ centered at the respective fixed points. Hence, these elements correspond to the generator $c$ of $C=\pi_1(PSL_2(\RR))$. We have already identified $c$ as an element of $\wt{SL_2}(\RR)$. Let $\alpha$ and $\beta$ be the unique elements in $\wt{P}^{-1}(\alpha_0)$ and $\wt{P}^{-1}(\beta_0)$ respectively, such that $\alpha^p=\beta^q=c$. It is clear that $\alpha$ and $\beta$ generate $\wt{\Gamma}$. The relation $\alpha^p=\beta^q$ is (essentially) the only relation in $\wt{\Gamma}$, because the factorization by $C$ maps $\wt{SL_2}(\RR)$ on $PSL_2(\RR)$ and $\wt{\Gamma}$ on $\Gamma_{p,q}$. Obviously $\wt{\Gamma}$ is a discrete subgroup of $\wt{SL_2}(\RR)$. The center is $Z(\wt{\Gamma})=C$ because $\Gamma_{p,q}$ has no center. We have obtained a presentation
\begin{gather}\label{The Group G}
\wt{\Gamma} \cong \; <\alpha,\beta \; ; \; \alpha^p=\beta^q>
\end{gather}
which is exactly the presentation (\ref{TheKnotGroup}) of the torus knot group.
\end{proof}

\subsection{Properties of the discrete group $<\alpha,\beta \; ; \; \alpha^p=\beta^q>$}

In this section we discuss some properties of the group $\wt{\Gamma}$ defined by the above presentation. Most of these properties, if not all, are well-known, but we shall sketch the proofs for completeness and to be able to give the formulations suitable for our purposes. In particular, we shall see that for any natural number $r$ co-prime with $p$ and $q$, there is a normal subgroup $G_r\subset\wt{\Gamma}$ of index $r$, isomorphic to $\wt{\Gamma}$ as an abstract group. Although many of the properties of $\wt{\Gamma}$ can be deduced from the presentation, we shall use the notation and some facts resulting from Lemma \ref{Lemma CenExt(G)->CenEct(SL)}. In particular, the center $Z(\wt{\Gamma})$ coincides with the center of $\wt{SL_2}(\RR)$, i.e.
\begin{gather*}
Z(\wt{\Gamma})\; = \; <c> \quad , \quad c=\alpha^p=\beta^q \;.
\end{gather*}
We have $\wt{\Gamma}/Z(\wt{\Gamma})\cong \Gamma_{p,q}$.

Let us set some notational conventions. For $g\in \wt{\Gamma}$ we denote by $\ol{g}:=g \wt{\Gamma}'$ and $g_0=g Z(\wt{\Gamma})$ the corresponding cosets with respect to the commutator subgroup and the center of $\wt{\Gamma}$, resepctively. Also, we let $N_{\wt{\Gamma}}(g)$ denote the smallest normal subgroup of $\wt{\Gamma}$ containing $g$ (and similarly for other groups instead of $\wt{\Gamma}$).

\begin{proposition}\label{Prop properties of G}

(i) The factorgroup $H:=\wt{\Gamma}/\wt{\Gamma}'$ is infinite cyclic, generated by the projection $\ol{x}$ of the element $x := \alpha^{q_1}\beta^{p_1}$, where $pp_1+qq_1=1$.

(ii) The canonical projection $\wt{\Gamma} \lw \wt{\Gamma}/Z(\wt{\Gamma})\cong\Gamma_{p,q}$ induces an isomorphism of the commutator subgroups
$\wt{\Gamma}' \cong \Gamma_{p,q}'$.

(iii) We have $N_{\wt{\Gamma}}(x)=\wt{\Gamma}$, in other words, the group $\wt{\Gamma}$ is completely destroyed by adding the relation $x=1$.
\end{proposition}

\begin{proof} From the presentation (\ref{TheKnotGroup}) one can see that the elements of $\wt{\Gamma}/\wt{\Gamma}'$ are of the form
$\ol{\alpha}^i\ol{\beta}^j$. The equalities $\ol{x}^p=\ol{\beta}$ and $\ol{x}^q=\ol{\alpha}$ imply that $\ol{x}$ generates $\wt{\Gamma}/\wt{\Gamma}'$. To see that $\wt{\Gamma}/\wt{\Gamma}'$ is infinite, notice that it can be obtained from the free abelian group of rank two by adding only one relation $\ol{\alpha}^p\ol{\beta}^{-q}=1$. This proves (i).

To prove (ii), notice that any surjective homomorphism of groups induces a surjective homomorphism of the commutator subgroups. We have to prove injectivity. Let us take $g \in \wt{\Gamma}'$ and suppose $g_0=1$ in $\wt{\Gamma}/Z(\wt{\Gamma})$. This means $g=c^{m}$ for some $m\in \ZZ$. Hence $\ol{g}=\ol{x}^{pqm}$. On the other hand, $g \in \wt{\Gamma}'$ implies $\ol{g}=1$. Hence $m=0$ and $g=1$.

To prove (iii), in view of (i), it suffices to show that $N_{\wt{\Gamma}}(x)\supset \wt{\Gamma}'$. We have $\ol{x}^{pq} = \ol{c}$, so we can work in $\Gamma_{p,q}$ and show that $N_{\Gamma_{p,q}}(x_0)\supset \Gamma_{p,q}'$. Notice that $\Gamma_{p,q}'=N_{\Gamma_{p,q}}([\alpha_0^{q_1},\beta_0^{p_1}])$ since $q_1$ is co-prime with $p$ and $p_1$ is co-prime with $q$. Now, we have $[\alpha_0^{q_1},\beta_0^{p_1}]=x_0\alpha_0^{-q_1}x_0^{-1}\alpha_0^{q_1}\in N_{\Gamma_{p,q}}(x_0)$ and hence $N_{\Gamma_{p,q}}(x_0)\supset \Gamma_{p,q}'$.
\end{proof}

From Proposition \ref{Prop properties of G}(ii) and Proposition \ref{Prop Comut Quad Group} we
obtain the following classical result.

\begin{corollary}\label{Cons Comut_G-Free}
The commutator subgroup $\wt{\Gamma}'$, of the group $\wt{\Gamma}$, is free of rank
$(p-1)(q-1)$. Any of the following two sets of elements generate
$G'$.
\begin{gather}\label{SetGenG' xi}
\xi_{(i,j)} := [\alpha^i,\beta^j] =
\alpha^i\beta^j\alpha^{-i}\beta^{-j} \quad , \; i=1,...,p-1\;,
j=1,...,q-1  \;,
\end{gather}
\begin{gather}\label{SetGenG' eta}
\qquad  \eta_{(k,j)} := [\alpha,\alpha^k\beta^j\alpha^{-k}] =
\alpha^k[\alpha,\beta^j]\alpha^{-k} \; , \; k=0,...,p-2,\;
j=1,...,q-1 .
\end{gather}
\end{corollary}

It is a well-known fact that knot complements are aspherical. Hence knot groups do not admit elements of finite order. In our case this is immediately deduced by the fact that $\wt{\Gamma}'$ and $\wt{\Gamma}/\wt{\Gamma}'$ are free. It also follows from the presence of the faithful representation $\wt{\Gamma}\lw \wt{SL_2}(\RR)$ obtained in Lemma \ref{Lemma CenExt(G)->CenEct(SL)}, and the fact that $\wt{SL_2}(\RR)$ has no elements of finite order.\\

Now we identify a family of subgroups $G_r$ of $\wt{\Gamma}$ such that each $G_r$ is isomorphic to $\wt{\Gamma}$, parametrized by natural numbers $r$ co-prime to both $p$ and $q$. Fix one such $r$ and let $r_p,r_q,p_r,q_r$ be integers such that
$$
pp_r+rr_p=1 \quad,\quad qq_r+rr_q=1 \;.
$$

\begin{lemma}\label{Lemma G1 -> G}
Let us denote $\alpha_r:=\alpha^r$, $\beta_r:=\beta^r$, and let $G_r$ be the subgroup of $\wt{\Gamma}$ generated by $\alpha_r$ and $\beta_r$. Then $G_r$ is isomorphic to $\wt{\Gamma}$, and has the following properties.

{\rm (i)} The center $Z(G_r)$ is contained in $Z(\wt{\Gamma})$ and generated by $c_r:=c^r$.

{\rm (ii)} The commutator subgroup $G_r'$ of $G_r$ coincides with the commutator subgroup $\wt{\Gamma}'$ of $\wt{\Gamma}$.

{\rm (iii)} The factorgroup $H_r:=G_r/G_r'$ is generated by the projection $\ol{x}_r$ of the element
$x_r:=\alpha_r^{q_1}\beta_r^{p_1}$, where as earlier $pp_1+qq_1=1$. We have $N_{\wt{\Gamma}}(x_r)=N_{G_r}(x_r)=G_r$.

{\rm (iv)} $G_r$ is a normal subgroup of $\wt{\Gamma}$ and the factorgroup is cyclic of order $r$. There are isomorphisms
$$
\wt{\Gamma}/G_r  \cong H/H_r \cong Z(\wt{\Gamma})/Z(G_r) \cong \ZZ_r \;.
$$

{\rm (v)} The restriction of the projection $\wt{P}:\wt{\Gamma}\lw \wt{\Gamma}/Z(\wt{\Gamma})$ to $G_r$ is surjective, i.e.
$$
\wt{P}(G_r)=\Gamma_{p,q} \;.
$$
\end{lemma}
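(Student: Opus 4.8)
The plan is to route the entire lemma through the single homomorphism $\phi:\wt{\Gamma}\lw\wt{\Gamma}$ determined by $\alpha\mapsto\alpha_r=\alpha^r$ and $\beta\mapsto\beta_r=\beta^r$, whose image is exactly $G_r$. This map is well defined because $\alpha^p=\beta^q=c$ gives
$$
\alpha_r^p=\alpha^{rp}=(\alpha^p)^r=c^r=(\beta^q)^r=\beta^{rq}=\beta_r^q \;,
$$
so the defining relation of $\wt{\Gamma}$ is preserved and $\phi(c)=\alpha_r^p=c^r=:c_r$. Thus $G_r=\phi(\wt{\Gamma})$, and the whole statement reduces to proving that $\phi$ is injective (which yields $G_r\cong\wt{\Gamma}$) and then reading off the five properties from the two standard exact sequences, the center and the abelianization.

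The core step, and the one I expect to be the main obstacle, is injectivity of $\phi$. I would prove it with a short diagram chase (the four lemma) on the central extension of lemma \ref{Lemma CenExt(G)->CenEct(SL)}. The map $\phi$ fits into a commutative ladder whose rows are $1\lw Z(\wt{\Gamma})\lw\wt{\Gamma}\lw\Gamma_{p,q}\lw 1$: on $Z(\wt{\Gamma})=<c>\;\cong\ZZ$ the map $\phi$ is multiplication by $r$, hence injective, and on the quotient $\Gamma_{p,q}\cong\ZZ_p*\ZZ_q$ it induces $\ol{\phi}:\alpha_0\mapsto\alpha_0^r,\ \beta_0\mapsto\beta_0^r$. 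Since $\gcd(r,p)=\gcd(r,q)=1$, $\ol{\phi}$ maps each free factor $<\alpha_0>\;\cong\ZZ_p$ and $<\beta_0>\;\cong\ZZ_q$ isomorphically onto itself, so by the universal property of free products $\ol{\phi}$ is an automorphism of $\Gamma_{p,q}$. A chase then shows $\ker\phi\subset Z(\wt{\Gamma})$ and $\ker\phi\cap Z(\wt{\Gamma})=1$, giving $G_r\cong\wt{\Gamma}$. (Alternatively one can argue with the amalgamated-product normal form of $\wt{\Gamma}=<\alpha>*_{<c>}<\beta>$, observing that the nontrivial coset representatives of $<c_r>$ in $<\alpha_r>$ map to nontrivial representatives of $<c>$ in $<\alpha>$ up to a central factor, so reduced words stay reduced.) With the isomorphism established, property (i) and the first clause of (iii) are immediate: $\phi$ carries the center onto $Z(G_r)=<c_r>\;\subset Z(\wt{\Gamma})$, and carries the generator $x=\alpha^{q_1}\beta^{p_1}$ of $H=\wt{\Gamma}/\wt{\Gamma}'$ (proposition \ref{Prop properties of G}(i)) to $x_r=\alpha_r^{q_1}\beta_r^{p_1}$, whose class therefore generates $H_r=G_r/G_r'$.

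Next I would dispatch (v) independently: $\wt{P}(\alpha_r)=\alpha_0^r$ and $\wt{P}(\beta_r)=\beta_0^r$ generate $<\alpha_0>$ and $<\beta_0>$ respectively, again by coprimality, so $\wt{P}(G_r)=\Gamma_{p,q}$. Property (ii) then follows by combining (v) with proposition \ref{Prop properties of G}(ii): surjectivity of $\wt{P}|_{G_r}$ gives $\wt{P}(G_r')=\Gamma_{p,q}'=\wt{P}(\wt{\Gamma}')$, and since $\wt{P}$ is injective on $\wt{\Gamma}'$ while $G_r'\subset\wt{\Gamma}'$, we get $G_r'=\wt{\Gamma}'$.

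Finally I would treat (iv), and with it the normal-closure clause of (iii). By (ii) we have $\wt{\Gamma}'=G_r'\subset G_r$, so $G_r$ corresponds to the subgroup $H_r=G_r/\wt{\Gamma}'$ of the abelian group $H\cong\ZZ$; using $\ol{\alpha}=\ol{x}^q$, $\ol{\beta}=\ol{x}^p$ and $pp_1+qq_1=1$ one computes $\ol{x}_r=\ol{x}^r$, whence $H_r=r\ZZ$. Since $H$ is abelian every subgroup is normal, so $G_r\trianglelefteq\wt{\Gamma}$ and
$$
\wt{\Gamma}/G_r\cong H/H_r\cong\ZZ_r \;,
$$
while $Z(\wt{\Gamma})/Z(G_r)=<c>/<c^r>\;\cong\ZZ_r$; the three are matched by the maps induced from the inclusion and $\wt{P}$, where one uses $\gcd(pq,r)=1$ to see that $cG_r$ generates the quotient. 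For the last clause of (iii), normality of $G_r$ forces $N_{\wt{\Gamma}}(x_r)\subset G_r$, whereas proposition \ref{Prop properties of G}(iii) transported through $\phi$ gives $N_{G_r}(x_r)=G_r$; as $N_{G_r}(x_r)\subset N_{\wt{\Gamma}}(x_r)$, all three coincide with $G_r$. The only genuinely delicate point is the injectivity of $\phi$; everything after it is bookkeeping with the center, the abelianization, and the coprimality of $r$ with $p$ and $q$.
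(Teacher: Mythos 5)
Your proposal is correct, and it reaches the isomorphism $G_r\cong\wt{\Gamma}$ by a genuinely different route than the paper. The paper works inside the amalgamated free product $\wt{\Gamma}=\langle\alpha\rangle *_{\langle c\rangle}\langle\beta\rangle$ and invokes the subgroup structure of such products (namely that $\langle\alpha_r\rangle\cap\langle\alpha^p\rangle=\langle\alpha_r^p\rangle$ and likewise for $\beta$ forces the presentation $\langle\alpha_r,\beta_r\,;\,\alpha_r^p=\beta_r^q\rangle$), whereas you realize $G_r$ as the image of the endomorphism $\phi:\alpha\mapsto\alpha^r$, $\beta\mapsto\beta^r$ and prove injectivity by a four-lemma chase on the central extension $1\to\langle c\rangle\to\wt{\Gamma}\to\ZZ_p*\ZZ_q\to 1$, using coprimality of $r$ with $p$ and $q$ to see that the induced map on the free product is an automorphism. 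Your argument is more self-contained (it needs only the universal property of free products rather than normal-form facts about subgroups of amalgams), at the cost of having to set up the ladder; your parenthetical alternative via reduced words is essentially the paper's route. You also prove (ii) differently and, I would say, more cleanly: the paper verifies by direct computation that each generator $[\alpha^i,\beta^j]$ of $\wt{\Gamma}'$ lies in $G_r'$ (via $\alpha_r^{r_p}=\alpha c^{-p_r}$), while you deduce $G_r'=\wt{\Gamma}'$ formally from (v) together with the injectivity of $\wt{P}$ on $\wt{\Gamma}'$ from Proposition \ref{Prop properties of G}(ii); this reorders the logic (your (ii) depends on (v), which you prove first, exactly as the paper does by coprimality), but there is no circularity. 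The remaining bookkeeping for (i), (iii), (iv) — the monomorphism $H_r\hookrightarrow H$ sending $\ol{x}_r\mapsto\ol{x}^r$, normality of $G_r$ from $\wt{\Gamma}'\subset G_r$, and $\gcd(pq,r)=1$ to match the three cyclic quotients — coincides with the paper's.
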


\begin{proof}
We obviously have $\alpha_r^p=\beta_r^q=c^r=c_r$. The group $\wt{\Gamma}$ can be obtained as a free product with amalgamation
of the free groups $<\alpha>$, $<\beta>$, with the amalgamated subgroups $<\alpha^p>\sim<\beta^q>$. The cyclic groups
$<\alpha_r>$ and $<\beta_r>$ are subgroups of $<\alpha>$ and $<\beta>$ respectively. We have $<\alpha_r>\cap<\alpha^p>=<\alpha_r^p>$ and $<\beta_r>\cap<\beta^q>=<\beta_r^q>$. Thus
\begin{gather*}
G_r \cong \; <\alpha_r,\beta_r \; ;\; \alpha_r^p=\beta_r^q> \;.
\end{gather*}
Hence $G_r$ is isomorphic to $\wt{\Gamma}$ and an isomorphism is given by
\begin{gather}\label{Isomorph G_1 G}
{\mc I}_r : \wt{\Gamma} \lw G_r \quad,\quad \alpha \lo \alpha_r \quad,\quad \beta \lo \beta_r \;.
\end{gather}
Since $c_r=\alpha_r^p=\beta_r^q$, property (i) is obvious. Moreover, we have $Z(\wt{\Gamma})/Z(G_r)\cong\ZZ_{r}$.

To prove property (ii) it is sufficient to see that the elements $\xi_{(i,j)}=[\alpha^i,\beta^j]$ belong to $G_r'$ (see corollary \ref{Cons Comut_G-Free}). We have
$$
\alpha_r^{r_p}=\alpha^{rr_p}=\alpha^{rr_p+pp_r}c^{-p_r}=\alpha c^{-p_r} \quad,\quad \beta_r^{r_q}=\beta c^{-q_r} \;.
$$
Hence $[ \alpha^i,\beta^{j} ] = [ \alpha_r^{ir_p}, \beta_r^{jr_q} ]\in G_r'$.

The proof of property (iii) is completely analogous to the proof of parts (i) and (iii) of Proposition \ref{Prop properties of G}.

To prove (iv) we observe that, since $G_r'=\wt{\Gamma}'$, there is a natural monomorphism
\begin{gather*}
\begin{array}{ccc}
H_r & \hookrightarrow & H \\
\ol{x}_r & \lo & \ol{x}^{r}
\end{array} \quad .
\end{gather*}
The quotient is $H/H_r \cong  \ZZ_r$.

The subgroup $G_r$ is normal in $\wt{\Gamma}$, because $\wt{\Gamma}'=G_r'\subset G_r$ and for any pair of elements $g\in \wt{\Gamma}$ and $g_1 \in G_r$ we have $gg_1g^{-1}=[g,g_1]g_1 \in G_r$. We obtain
$$
\wt{\Gamma}/G_r  \cong  (\wt{\Gamma}/\wt{\Gamma}')/(G_r/G_r')  \cong  \sco{<\ol{x}>/<\ol{x}_r>}  \cong  \sco{<\ol{x}>/<\ol{x}^{r}>} \cong  \ZZ_{r} .
$$
It remains to prove (v). Recall that $\wt{P}(\wt{\Gamma})=\Gamma_{p,q}=<\alpha_0,\beta_0 ; \alpha_0^p=\beta_0^q=1>\cong \ZZ_p*\ZZ_q$, where $\alpha_0=\wt{P}(\alpha)$ and $\beta_0=\wt{P}(\beta)$. We have $\wt{P}(\alpha_r)^{r_p}=\alpha_0$ and $\wt{P}(\beta_r)^{r_q}=\beta_0$. Hence $\wt{P}(G_r)=\Gamma_{p,q}$. The lemma is proved.
\end{proof}

\begin{corollary} For any natural number $r$ co-prime to both $p$ and $q$, the coset space $\wt{SL_2}(\RR)/G_r$ is weakly homotopically equivalent to the torus knot complement ${\mathbb S}^3\setminus K_{p,q}$.
\end{corollary}

Our task now becomes to identify an $r$ for which $\wt{SL_2}(\RR)/G_r$ is in fact diffeomorphic to the torus knot complement. As pointed out in the introduction, this has been done by Raymond and Vasquez, in \cite{RayVas}, using the theory of Seifert fibrations. We intend to find an explicit diffeomorphism using the results on automorphic forms proved in the next section.

\subsection{Automorphic forms for a $(p,q,\infty)$-triangle group}\label{Sect Auto-f Gamma_pq}

In this section we study automorphic forms related to the $(p,q,\infty)$-triangle group
\begin{gather*}
\Gamma_{p,q} =\; <\alpha_0,\beta_0\; ;\; \alpha_0^p=1 , \beta^q=1 > \;.
\end{gather*}
We shall refer to the fundamental domain $D_1$ for the action of $\Gamma_{p,q}$ on $\HH^2$ described in remark \ref{zab AltFundDom Gamma(p,q) D_1}. Recall that the parabolic element $\gamma_0=(\alpha_0\beta_0)^{-1}$ generates the stabilizer of the cusp $\infty$. According to formulae \ref{Javni formuli Gamma(p,q)}, we have
$$
\gamma_0(z)=z+\lambda \quad,\quad {\rm where} \quad \lambda = 2(\cos\frac{\pi}{p}+\cos\frac{\pi}{q}) \;.
$$
The preimage $\wt{\Gamma}=\wt{P}^{-1}(\Gamma_{p,q})$ in $\wt{SL_2}(\RR)$ is generated by the preimages $\alpha,\beta$ as defined in the proof of Lemma \ref{Lemma CenExt(G)->CenEct(SL)}. We want to understand the algebra of automorphic forms ${\mc A}_{\wt\Gamma}^{*,*}$. Let us interpret the definition of automorphic forms for the group in question. Let $\omega(z,dz)=f(z)dz^k$ be a $k$-form on $\HH^2$, with $k\in\QQ$ and $f(z)$ holomorphic on $\HH^2$. Let $\chi\in{\rm Hom}(\wt{\Gamma},U(1))$ be a character. Then $\omega$ is $\chi$-automorphic if

(a) $f(\alpha_0(z))\alpha'(z) = \chi(\alpha)f(z)$.

(b) $f(\beta_0(z))\beta'(z) = \chi(\beta)f(z)$.

(c) $f(z)=\sum_{n=0}^{\infty} a_ne^{2\pi i \frac{z}{\lambda}}$.

\begin{remark} Notice that a character $\chi$ of $\wt{\Gamma}$ is completely determined by its value on the element $x=\alpha^{q_1}\beta^{p_1}$. This follows directly from the facts that the kernel of any character must contain the commutator subgroup $\wt{\Gamma}'$ and that $\ol{x}$ generates $\wt{\Gamma}/\wt{\Gamma}'$.
\end{remark}

\begin{lemma}\label{Lemma N+...=k/k_o}
Let $\omega(z,dz)=f(z)dz^k$ be a nonzero $\chi$-automorphic $k$-form for $\wt{\Gamma}$. Let $n_a$, $n_b$ and $n_\infty$ denote the orders of vanishing of $f$ at $a$, $b$ and $\infty$ respectively; let $N$ denote the total number of other zeros of $f$ in the fundamental domain $D_1$, counted with multiplicities, with the identifications of the edges taken into account. Then
$$
N + n_\infty + \frac{n_a}{p} + \frac{n_b}{q} = k \frac{pq-p-q}{pq} \;.
$$
\end{lemma}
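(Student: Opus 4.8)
The plan is to establish this as a \emph{valence formula}, by applying the argument principle to the logarithmic derivative $\frac{1}{2\pi i}\frac{f'(z)}{f(z)}\,dz$ integrated around the boundary of the one-cusped fundamental domain $D_1$ from remark \ref{zab AltFundDom Gamma(p,q) D_1}. Recall that $D_1$ is a hyperbolic quadrilateral with a single ideal vertex (the cusp $\infty$, fixed by the parabolic $\gamma_0$), an elliptic vertex at $a$ of interior angle $\frac{2\pi}{p}$, and two elliptic vertices $b,b'=\gamma_0(b)$ each of interior angle $\frac{\pi}{q}$; its four edges are paired by $\alpha_0$ (the two edges meeting at $a$) and by $\gamma_0$ (the two edges running up to the cusp). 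First I would replace $D_1$ by a compact contour $\partial D_1^{*}$ obtained by truncating the cusp at height $\mathrm{Im}\,z=Y$ and excising small circular arcs around $a$, $b$, $b'$ and around any zero of $f$ lying on an edge; after grouping the (finitely many) edge zeros in $\Gamma_{p,q}$-equivalent pairs, the argument principle gives $\frac{1}{2\pi i}\oint_{\partial D_1^{*}}\frac{f'}{f}\,dz=N$, the number of interior non-special zeros.

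Next I would evaluate the boundary contributions term by term, taking the limits $Y\to\infty$ and indentation radii $\to 0$. The horizontal segment at height $Y$, using the Fourier expansion $f(z)=\sum_{n\ge n_\infty}a_ne^{2\pi i n z/\lambda}$ with $a_{n_\infty}\ne 0$, contributes $-n_\infty$ (one period of length $\lambda$, traversed from right to left). For the side pairings I would use the identity obtained by differentiating the automorphy relation $f(\gamma z)\,\gamma'(z)^{k}=\chi(\gamma)f(z)$, namely
$$
\frac{f'(\gamma z)}{f(\gamma z)}\gamma'(z)=\frac{f'(z)}{f(z)}-k\,\frac{\gamma''(z)}{\gamma'(z)} \;.
$$
On two edges paired by $\gamma$ the orientations are opposite, so the $\frac{f'}{f}$ parts cancel (the constant factor $\chi(\gamma)$ drops out under $d\log$), and each pairing leaves only the cocycle term $-\frac{k}{2\pi i}\int \frac{\gamma''}{\gamma'}\,dz=-\frac{k}{2\pi i}\int d\log\gamma'$. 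For the parabolic pairing by $\gamma_0(z)=z+\lambda$ we have $\gamma_0''\equiv 0$, so it contributes nothing; the elliptic pairing by $\alpha_0$ together with the small arcs around the vertices is what produces the remaining terms.

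The heart of the computation is to assemble these cocycle integrals $\int d\log\gamma'$ with the indentation arcs. Around $a$ the arc sees $f\sim(z-a)^{n_a}$ and spans the interior angle $\tfrac{2\pi}{p}$, yielding $-\tfrac{n_a}{p}$; the two arcs at $b,b'$ together yield $-\tfrac{n_b}{q}$; and the leftover $d\log\gamma'$ integrals, once combined with the turning of the contour at the corners, assemble by Gauss--Bonnet (equivalently, by evaluating the cocycle integral over the $\alpha_0$-paired edge directly) into $\frac{k}{2\pi}\,\mathrm{Area}(D_1)=k\bigl(1-\tfrac1p-\tfrac1q\bigr)=k\,\tfrac{pq-p-q}{pq}$. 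Collecting all contributions and moving them to the right gives the asserted identity. I expect this angle/turning bookkeeping --- getting the coefficients $\tfrac1p,\tfrac1q$ and the constant weight term exactly right for the specific (doubled) angles of $D_1$ --- to be the main obstacle; the fractional degree and the character $\chi$ are harmless, since passing to the universal covers $\wt{SL_2}(\RR)$ and $\wt{\CC^\times}$ makes $\gamma'(z)^{k}$ single valued and $\chi$ is a constant that disappears from $d\log f$, the only delicate point being the cusp when $\chi(\gamma_0)\ne 1$, in which case $n_\infty$ must be read as the generalized order in the expansion $f=\sum_n a_n e^{2\pi i(n+\theta)z/\lambda}$ with $\chi(\gamma_0)=e^{2\pi i\theta}$. (Equivalently, one may phrase the statement bundle-theoretically: $\omega$ is a meromorphic section of the $k$-th power of the orbifold canonical bundle of $\HH^2/\Gamma_{p,q}$ twisted by the flat unitary bundle of $\chi$, whose total degree is $k\,\deg K_{\mathrm{orb}}=k(1-\tfrac1p-\tfrac1q)$ since the flat twist has degree zero.)
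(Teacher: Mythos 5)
Your proposal is correct and is exactly the approach the paper takes: the paper's entire proof is the remark that the statement follows from a standard contour-integral (valence formula) argument as in Lemma 1, p.\ I-14 of Ogg, which is precisely the computation you carry out, with the right contributions $-n_\infty$ from the truncation, $-n_a/p$ and $-n_b/q$ from the indentations at the elliptic vertices (noting correctly that the angle at $a$ is $\tfrac{2\pi}{p}$ while the two half-angles at $b,b'$ sum to $\tfrac{2\pi}{q}$), and the cocycle term $\tfrac{k}{2\pi}\mathrm{Area}(D_1)=k\tfrac{pq-p-q}{pq}$. Your observation that the fractional degree and the character are harmless because only $d\log$ of the automorphy factor enters, and your caveat about the cusp expansion when $\chi$ is nontrivial on the parabolic, are both sound and in fact more careful than the paper's one-line treatment.
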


The proof is standard, uses a contour integral, and is analogous to the proof of Lemma 1 on page I-14 of \cite{Ogg}.\\

\begin{corollary}\label{Coro k(pq-p-q) integer}
In the notation of the above lemma, the number $m=k(pq-p-q)$ is an integer. The integer $m$ is divisible by $p$ (respectively, by $q$) if and only if $n_a$ (respectively, $n_b$) is. In particular, if $n_a=n_b=0$, then $k\frac{pq-p-q}{pq}$ is an integer.
\end{corollary}

Put
$$
k_o = \frac{pq}{pq-p-q} \;.
$$
(This number has some geometric meaning. Namely, the fundamental triangle $\Delta$ (see section \ref{Sect Def Triang Groups}) has Lobachevsky area $\frac{\pi}{k_o}$. This follows directly from the formula $\frac{1}{k_o}=1-\frac{1}{p}-\frac{1}{q}$.)

The following two lemmas are taken from Milnor's article \cite{MilnorBrieskornMani}. They relate the character of an automorphic form to the order of vanishing of the form at the vertices $a$ and $b$.

\begin{lemma}\label{Lemma chi(a) and chi(b)}
In the notation of Lemma \ref{Lemma N+...=k/k_o}, the values of the character $\chi$ on the generators $\alpha$ and $\beta$ of $\wt{\Gamma}$ are
$$
\chi(\alpha)=e^{2\pi i (\frac{k+n_a}{p})} \quad , \quad \chi(\beta)=e^{2\pi i (\frac{k+n_b}{q})} \;.
$$
In particular, the value of $\chi$ on the element $x$ is $\chi(x)=e^{2\pi i \frac{k}{pq}} e^{2\pi i \frac{qq_1n_a + pp_1n_b}{pq}}$.
\end{lemma}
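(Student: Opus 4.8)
The plan is to read off each character value from the local behaviour of $\omega$ at the two elliptic fixed points, exactly as one extracts the nebentypus of a classical modular form from the rotation number at an elliptic point. First I would work near the vertex $a$, the fixed point of $\alpha_0$, whose rotation angle is $\frac{2\pi}{p}$. Conjugating the elliptic M\"obius transformation $\alpha_0$ to a genuine rotation, I choose a holomorphic local coordinate $w$ centred at $a$ (so $w(a)=0$) in which $\alpha_0$ acts by $w\mapsto\epsilon w$ with $\epsilon=e^{2\pi i/p}$; such a coordinate exists because the multiplier of $\alpha_0$ at $a$ has argument equal to the rotation angle $\frac{2\pi}{p}$. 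Writing the form in this coordinate, $\omega=g(w)\,dw^k$ with $g(w)=f(z(w))(dz/dw)^k$, and noting that $dz/dw$ is holomorphic and nonvanishing near $w=0$, the order of vanishing of $g$ at $0$ is precisely $n_a$; thus $g(w)=w^{n_a}h(w)$ with $h(0)\neq 0$.

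Next I would impose the automorphy relation. In the $w$-coordinate the pullback law $\alpha^*\omega=\chi(\alpha)\omega$ becomes $g(\epsilon w)\,\epsilon^{k}=\chi(\alpha)\,g(w)$, where $\epsilon^{k}$ is the value prescribed by the lift of the derivative $\alpha'$ to $\wt{\CC^\times}$ (this is the only point at which the passage to $\wt{SL_2}(\RR)$ genuinely enters). Substituting $g(w)=w^{n_a}h(w)$ gives $\epsilon^{\,n_a+k}h(\epsilon w)=\chi(\alpha)h(w)$, and letting $w\to 0$, where $h(\epsilon\cdot 0)=h(0)\neq 0$, the nonzero factor cancels to yield $\chi(\alpha)=\epsilon^{\,k+n_a}=e^{2\pi i(k+n_a)/p}$. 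The identical argument at the vertex $b$, the fixed point of $\beta_0$ with rotation angle $\frac{2\pi}{q}$ and local vanishing order $n_b$, produces $\chi(\beta)=e^{2\pi i(k+n_b)/q}$.

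Finally, since $x=\alpha^{q_1}\beta^{p_1}$ and $\chi$ is a homomorphism, $\chi(x)=\chi(\alpha)^{q_1}\chi(\beta)^{p_1}$. Collecting the exponents over the common denominator $pq$ and using $pp_1+qq_1=1$, the exponent becomes $\frac{qq_1(k+n_a)+pp_1(k+n_b)}{pq}=\frac{k+qq_1n_a+pp_1n_b}{pq}$, so $\chi(x)=e^{2\pi i k/(pq)}\,e^{2\pi i(qq_1n_a+pp_1n_b)/(pq)}$, which is the stated formula.

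I expect the main obstacle to be the correct interpretation of the fractional power $\epsilon^{k}$ in the automorphy relation: it must be pinned down to the precise value dictated by $\alpha'(z)\in\wt{\CC^\times}$, not merely up to a $p$-th root of unity. The natural consistency check is that our chosen lift $\alpha$ satisfies $\alpha^p=c$, whence $\chi(\alpha)^p=\chi(c)=e^{2\pi i k}$ by the computation in lemma \ref{Lemma AutForm and Levels}, so $\chi(\alpha)$ is forced to be a $p$-th root of $e^{2\pi i k}$; the local computation above is exactly what selects the correct root through the integer $n_a$. Making this branch bookkeeping rigorous---tracking the argument of $\alpha'(z)$ continuously from the identity of $\wt{SL_2}(\RR)$ and matching it to the rotation number $1/p$ at $a$---is the one delicate point, while the rest is the standard leading-coefficient argument.
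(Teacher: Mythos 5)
Your proposal is correct and follows essentially the same route as the paper: the paper likewise extracts $\chi(\alpha)$ by Taylor-expanding $f$ about the elliptic fixed point $a$, using that $(\alpha'(a))^k$ projects to $e^{2\pi i k/p}$, and comparing the coefficients of $(z-a)^{n_a}$, then evaluates $\chi(x)$ via $x=\alpha^{q_1}\beta^{p_1}$. Your conjugation to an exact rotation coordinate and the consistency check $\chi(\alpha)^p=\chi(c)=e^{2\pi i k}$ are a slightly more careful rendering of the same leading-coefficient argument, correctly identifying the branch of $\epsilon^k$ as the one delicate point.
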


\begin{proof} We will evaluate $\chi(\alpha)$, the case of $\beta$ being analogous. Since $\omega$ is $\chi$-automorphic, $f$ satisfies the relation
$$
f(\alpha(z))(\alpha'(z))^k = \chi(\alpha) f(z) \;.
$$
Notice that $(\alpha'(a))^k$ projects to $e^{2\pi i \frac{k}{p}}$ in $\CC^\times$. Expand $f(z)$ and $f(\alpha(z))$ as Taylor series about $a$:
$$
f(z)=\sum\limits_{n=n_a}^{\infty} f_n(z-a)^n \quad , \quad f(\alpha(z)) = \sum\limits_{n=n_a}^{\infty} f_n e^{2\pi i \frac{n}{p}} (z-a)^n \;.
$$
Now the automorphy relation can be written as
$$
\sum\limits_{n=n_a}^{\infty} e^{2\pi i \frac{k}{p}} f_n e^{2\pi i \frac{n}{p}} (z-a)^n = \sum\limits_{n=n_a}^{\infty} \chi(\alpha)f_n(z-a)^n \;.
$$
A comparison of the coefficients in front of $(z-a)^{n_a}$ yields the desired
$$
e^{2\pi i \frac{n_a+k}{p}} = \chi(\alpha) \;.
$$
The last statement in the lemma follows immediately from the expression $x=\alpha^{q_1}\beta^{p_1}$.
\end{proof}

Now, we identify a character $\chi_o$, which is of some particular importance. Set
$$
\chi_o(x) = e^{2\pi i \frac{k_o}{pq}} = e^{2\pi i \frac{1}{pq-p-q}} \;.
$$
On the generators $\alpha$ and $\beta$ we get
$$
\chi_o(\alpha)=e^{2\pi i \frac{k_o}{p}} = e^{2\pi i \frac{q}{pq-p-q}} \quad,\quad \chi_o(\beta)=e^{2\pi i \frac{k_o}{q}} = e^{2\pi i \frac{p}{pq-p-q}} \;.
$$
Clearly, we have $\chi_o^{pq-p-q}=1$.

\begin{lemma}\label{Lemma chi=chi_o^k/k_o}
In the notation of Lemma \ref{Lemma N+...=k/k_o}, assume $n_a=n_b=0$. Then $k/k_o$ is a non-negative integer and $\chi=\chi_o^{k/k_o}$.
\end{lemma}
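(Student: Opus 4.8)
The plan is to read off all three assertions directly from the bookkeeping already established, with essentially no new computation. Setting $n_a=n_b=0$ in the order-counting identity of Lemma \ref{Lemma N+...=k/k_o} gives
$$
N + n_\infty = k\,\frac{pq-p-q}{pq} = \frac{k}{k_o} \;.
$$
Since $N$ counts zeros with multiplicities and $n_\infty\geq 0$ is the order of vanishing at the cusp (the Fourier expansion of $f$ at $\infty$ from condition (c) starts at a nonnegative index), the left-hand side is a nonnegative integer. This simultaneously yields the nonnegativity of $k/k_o$, while its integrality is exactly Corollary \ref{Coro k(pq-p-q) integer} applied with $n_a=n_b=0$ (which asserts that $k\frac{pq-p-q}{pq}$ is an integer in this case).

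It remains to identify the character. First I would note that, because $k/k_o$ is now known to be a nonnegative integer, the expression $\chi_o^{k/k_o}$ is an honest power of $\chi_o$ in the character group of $\wt\Gamma$, so the claimed identity is meaningful. By the remark preceding Lemma \ref{Lemma N+...=k/k_o}, any character of $\wt\Gamma$ is completely determined by its value on $x=\alpha^{q_1}\beta^{p_1}$; hence it suffices to compare $\chi$ and $\chi_o^{k/k_o}$ on $x$.

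Evaluating, Lemma \ref{Lemma chi(a) and chi(b)} with $n_a=n_b=0$ gives
$$
\chi(x) = e^{2\pi i \frac{k}{pq}} \;.
$$
On the other hand, from the definition $\chi_o(x)=e^{2\pi i \frac{k_o}{pq}}$ we obtain
$$
\chi_o^{k/k_o}(x) = \chi_o(x)^{k/k_o} = e^{2\pi i \frac{k_o}{pq}\cdot\frac{k}{k_o}} = e^{2\pi i \frac{k}{pq}} \;.
$$
Thus $\chi$ and $\chi_o^{k/k_o}$ agree on $x$, and therefore coincide as characters of $\wt\Gamma$, as claimed.

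There is no substantial obstacle here: the lemma is a direct consequence of Lemmas \ref{Lemma N+...=k/k_o} and \ref{Lemma chi(a) and chi(b)} together with Corollary \ref{Coro k(pq-p-q) integer}. The only point requiring a moment's care is the logical ordering — one must first establish that $k/k_o$ is a nonnegative integer before the power $\chi_o^{k/k_o}$ even makes sense as a character — after which the comparison on the single generator $x$ of the abelianization closes the argument.
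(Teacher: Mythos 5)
Your proof is correct and follows essentially the same route as the paper's: the non-negativity and integrality of $k/k_o$ are read off from Lemma \ref{Lemma N+...=k/k_o} with $n_a=n_b=0$, and the character identity is verified by evaluating both sides on the generator $x$ of the abelianization via Lemma \ref{Lemma chi(a) and chi(b)}. The only difference is cosmetic — you make explicit the observation that $k/k_o=N+n_\infty$ and the remark that the power $\chi_o^{k/k_o}$ must first be shown to make sense, both of which the paper leaves implicit.
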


\begin{proof} The fact that $k/k_o$ is a non-negative integer follows directly from Lemma \ref{Lemma N+...=k/k_o}. From Lemma \ref{Lemma chi(a) and chi(b)} we obtain $\chi(\alpha)=e^{2\pi i \frac{k}{p}}$ and $\chi(\beta)=e^{2\pi i \frac{k}{q}}$, whence
$$
\chi(x)=e^{2\pi i \frac{k}{pq}} = e^{2\pi i \frac{k_o}{pq}\frac{k}{k_o}} = \chi_o(x)^{k/k_o} \;.
$$
Since the characters of $\wt{\Gamma}$ are determined by their values on $x$, we get $\chi=\chi_o^{k/k_o}$.
\end{proof}

We now proceed to construct some specific automorphic forms. Recall from Proposition \ref{Prop H->Theta} that the orbifold $\Theta=\HH^2/\Gamma_{p,q}$ has underlying surface $X_{\Theta}$ isomorphic to $\CC$. The singular locus $\Sigma_\Theta$ consists of two cone points of indices $p$ and $q$ respectively: the images of the vertices $a$ and $b$ of the fundamental domain $D_1$. Thus we have a uniformizing function for $\Theta$,
$$
\theta : \HH^2 \lw \CC = X_{\Theta} \;,
$$
which can be chosen so that $\theta(a)=0$ with multiplicity $p$, $\theta(b)=1$ with multiplicity $q$. Let $\wh{\HH^2}=\HH^2\cup\{\infty\}$ and $\wh{\CC}=\CC\cup\{\infty\}=\CC\PP^1$. Then $\theta$ extends to a function $\theta:\wh{\HH^2}\lw\wh{\CC}$ with a simple pole at $\infty$.

The function $\theta$ is invariant\footnote{In many texts such a function would be called automorphic, but it does not satisfy the definition adopted here because of the pole at $\infty$.} on the orbits of $\Gamma_{p,q}$, i.e. $\theta(\gamma(z))=\theta(z)$ for all $\gamma\in\Gamma_{p,q}$. The derivative $\theta'(z)$ satisfies
$$
\theta'(\gamma(z))=(\gamma'(z))^{-1}\theta'(z) \quad , \quad \gamma\in\Gamma_{p,q} \;.
$$
Set
\begin{align*}
f_a=&\sco{\frac{(\theta')^q}{\theta(\theta-1)^{q-1}}}^{\frac{1}{pq-p-q}} \;,\\
f_b=&\sco{\frac{(\theta')^p}{\theta^{p-1}(\theta-1)}}^{\frac{1}{pq-p-q}} \;,\\
f_{\infty}=&\sco{\frac{(\theta')^{pq}}{\theta^{(p-1)q}(\theta-1)^{p(q-1)}}}^{\frac{1}{pq-p-q}} \;.
\end{align*}
It is a matter of direct verification, by counting zeros and poles in numerator and denominator, to show that $f_a,f_b,f_\infty$ are holomorphic functions on $\HH^2$ and at $\infty$. Furthermore, $f_a$ (resp. $f_b$, $f_\infty$) has a simple zero at $a$ (resp. $b$, $\infty$) and nowhere else in the closure $\wh{D_1}=\ol{D_1}\cup\{\infty\}$ of the fundamental domain of $\Gamma_{p,q}$ given in \ref{zab AltFundDom Gamma(p,q) D_1}.

\begin{lemma}\label{Lemma f_a,f_b,f_infty}
The forms
\begin{gather*}
\omega_a(z,dz)=f_a(z)dz^{\frac{q}{pq-p-q}} \quad,\quad \omega_b(z,dz)=f_b(z)dz^{\frac{p}{pq-p-q}} \quad,\\
\omega_{\infty}(z,dz)=f_{\infty}(z)dz^{\frac{pq}{pq-p-q}}
\end{gather*}
are automorphic forms for $\wt{\Gamma}$. The character of $\omega_\infty$ is $\chi_o$. The characters $\chi_a$ and $\chi_b$ of $\omega_a$ and $\omega_b$ evaluated at $x$ give
$$
\chi_a(x) = e^{2\pi i \frac{1}{p(pq-p-q)}}e^{2\pi i \frac{q_1}{p}} \quad,\quad \chi_b(x) = e^{2\pi i \frac{1}{q(pq-p-q)}}e^{2\pi i \frac{p_1}{q}} \;.
$$
The relations $\chi_a^p=\chi_b^q=\chi_o$ hold.
\end{lemma}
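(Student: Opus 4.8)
The plan is to reduce the statement to three results already in hand: the roots-of-forms Lemma \ref{Lemma Roots of forms}, which produces automorphy with \emph{some} character, and Lemmas \ref{Lemma chi(a) and chi(b)} and \ref{Lemma chi=chi_o^k/k_o}, which pin the character down from the orders of vanishing of the defining function.

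First I would observe that the $(pq-p-q)$-th powers of the three forms are honest automorphic forms of integer degree. Explicitly,
\begin{gather*}
\omega_a^{pq-p-q}=\frac{(\theta')^q}{\theta(\theta-1)^{q-1}}\,dz^q,\qquad
\omega_b^{pq-p-q}=\frac{(\theta')^p}{\theta^{p-1}(\theta-1)}\,dz^p,\\
\omega_\infty^{pq-p-q}=\frac{(\theta')^{pq}}{\theta^{(p-1)q}(\theta-1)^{p(q-1)}}\,dz^{pq}.
\end{gather*}
Because $\theta\circ\gamma=\theta$ and $\theta'(\gamma(z))=(\gamma'(z))^{-1}\theta'(z)$ for every $\gamma\in\Gamma_{p,q}$, each numerator $(\theta')^m$ transforms with weight $m$ while the invariant denominators contribute weight $0$; hence the pullback of each of the three forms equals itself. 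Since the degrees $q,p,pq$ are integers there is no $\wt{\CC^\times}$-ambiguity, so the character is trivial, and holomorphy at the cusp is precisely the finite-limit statement recorded just before the lemma (the poles of $\theta'$ at $\infty$ being absorbed by the denominators). Applying Lemma \ref{Lemma Roots of forms} with $n=pq-p-q$ then shows that $\omega_a,\omega_b,\omega_\infty$ are automorphic with characters $\chi_a,\chi_b,\chi_\infty$ whose $(pq-p-q)$-th powers are trivial.

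Next I would feed in the orders of vanishing, which are read off from the properties stated before the lemma: $(n_a,n_b,n_\infty)=(1,0,0)$ for $f_a$, $(0,1,0)$ for $f_b$, and $(0,0,1)$ for $f_\infty$. For $\omega_a$, Lemma \ref{Lemma chi(a) and chi(b)} with $k=\frac{q}{pq-p-q}$, $n_a=1$, $n_b=0$ turns the factor $e^{2\pi i k/(pq)}$ into $e^{2\pi i/(p(pq-p-q))}$ and the factor $e^{2\pi i(qq_1n_a+pp_1n_b)/(pq)}$ into $e^{2\pi i q_1/p}$, yielding the claimed value of $\chi_a(x)$; the computation for $\chi_b(x)$ is the mirror image. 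For $\omega_\infty$ both $n_a$ and $n_b$ vanish, so Lemma \ref{Lemma chi=chi_o^k/k_o} applies directly: here $k=\frac{pq}{pq-p-q}=k_o$, so $k/k_o=1$ and $\chi_\infty=\chi_o$.

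Finally I would check the relations by evaluating at $x$: using $pp_1+qq_1=1$ and $p_1,q_1\in\ZZ$ one gets $\chi_a(x)^p=e^{2\pi i/(pq-p-q)}e^{2\pi i q_1}=e^{2\pi i/(pq-p-q)}=\chi_o(x)$ and, symmetrically, $\chi_b(x)^q=\chi_o(x)$. Since a character of $\wt\Gamma$ is determined by its value on $x$, this gives $\chi_a^p=\chi_b^q=\chi_o$. The one genuinely delicate point is the first step: one must make sure that passing to the $(pq-p-q)$-th power really lands in integer degree with trivial character, so that the roots lemma is legitimately applicable, and that the cusp behaviour survives the cancellation of poles; once that is secured, the rest is bookkeeping with the two character lemmas.
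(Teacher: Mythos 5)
Your proposal is correct and follows essentially the same route as the paper: the paper verifies the automorphy by the inline computation of $f_a(\gamma(z))$ (which is exactly the content of Lemma \ref{Lemma Roots of forms} applied to the integer-degree, trivial-character form $\omega_a^{pq-p-q}$) and then reads off $\chi_a(x)$, $\chi_b(x)$ from Lemma \ref{Lemma chi(a) and chi(b)} using the known orders of vanishing, just as you do. Your explicit invocation of Lemma \ref{Lemma Roots of forms} and of Lemma \ref{Lemma chi=chi_o^k/k_o} for $\omega_\infty$, and your check of $\chi_a^p=\chi_b^q=\chi_o$ by evaluation at $x$, merely make precise what the paper leaves as ``completely analogous'' and ``follows immediately from the explicit formulae.''
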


\begin{proof}
For $\gamma\in \wt{\Gamma}$ we have
\begin{align*}
f_a(\gamma(z)) & = \sco{\frac{(\theta'(\gamma(z)))^q}{\theta(\gamma(z))(\theta(\gamma(z))-1)^{q-1}}}^{\frac{1}{pq-p-q}} \\
 & = \sco{\frac{((\gamma'(z))^{-1}\theta'(z))^q}{\theta(z)(\theta(z)-1)^{q-1}}}^{\frac{1}{pq-p-q}} \\
 & = \chi(\gamma)(\gamma'(z))^{\frac{-q}{pq-p-q}} \sco{\frac{(\theta'(z))^q}{\theta(z)(\theta(z)-1)^{q-1}}}^{\frac{1}{pq-p-q}}\\
 & = \chi(\gamma)(\gamma'(z))^{\frac{-q}{pq-p-q}} f_a(z)
\end{align*}
where $\chi_a(\gamma)$ is a $(pq-p-q)$-th root of 1. The mapping $\gamma\lo\chi_a(\gamma)$ defines a character, and $\omega$ is a $\chi_a$-automorphic form. Since the zeros of $f_a$ are known, Lemma \ref{Lemma chi(a) and chi(b)} yields
$$
\chi_a(x) = e^{2\pi i \frac{1}{p(pq-p-q)}}e^{2\pi i \frac{q_1}{p}} \;.
$$
The situation with $\omega_b$ and $\omega_\infty$ is completely analogous. The relation between the characters follows immediately from the explicit formulae.
\end{proof}

\begin{lemma}\label{lemma f_a^p-f_b^q=f_inf}
The forms $\omega_a$, $\omega_b$, $\omega_{\infty}$ defined above satisfy a relation of the form
$$
\omega_{\infty} = c_a\omega_a^p + c_b\omega_b^q \;.
$$
for some nonzero complex numbers $c_a,c_b$.
\end{lemma}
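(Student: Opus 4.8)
The plan is to reduce the asserted identity among forms to an identity among their defining holomorphic functions, and to establish the latter by a direct comparison of the exponents appearing in $f_a$, $f_b$ and $f_\infty$. First I would observe that all three of $\omega_a^p$, $\omega_b^q$ and $\omega_\infty$ carry the same degree $\frac{pq}{pq-p-q}$: since $\omega_a=f_a\,dz^{q/(pq-p-q)}$ we get $\omega_a^p=f_a^p\,dz^{pq/(pq-p-q)}$, likewise $\omega_b^q=f_b^q\,dz^{pq/(pq-p-q)}$, while $\omega_\infty=f_\infty\,dz^{pq/(pq-p-q)}$. The common factor $dz^{pq/(pq-p-q)}$ carries through any linear combination, so the desired relation $\omega_\infty=c_a\omega_a^p+c_b\omega_b^q$ is equivalent to the relation $f_\infty=c_af_a^p+c_bf_b^q$ among holomorphic functions on $\HH^2$.

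Next I would compute the two ratios $f_a^p/f_\infty$ and $f_b^q/f_\infty$ explicitly. Writing $N=pq-p-q$, raising $f_a$ to the $p$-th power gives $f_a^p=\sco{(\theta')^{pq}/\bigl(\theta^p(\theta-1)^{p(q-1)}\bigr)}^{1/N}$; dividing by $f_\infty$ cancels $(\theta')^{pq}$ and all the $(\theta-1)$-factors, leaving $\bigl(\theta^{(p-1)q-p}\bigr)^{1/N}=\bigl(\theta^{N}\bigr)^{1/N}$, because $(p-1)q-p=pq-p-q=N$. An identical computation gives $f_b^q/f_\infty=\bigl((\theta-1)^{p(q-1)-q}\bigr)^{1/N}=\bigl((\theta-1)^{N}\bigr)^{1/N}$, using $p(q-1)-q=N$. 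Thus, once the fractional powers are resolved, $f_a^p$ is a constant multiple of $\theta f_\infty$ and $f_b^q$ is a constant multiple of $(\theta-1)f_\infty$, and the elementary identity $\theta-(\theta-1)=1$ is exactly what produces $f_\infty$ as a linear combination of the two.

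The one point requiring care, and the main obstacle, is that $\bigl(\theta^{N}\bigr)^{1/N}$ and $\bigl((\theta-1)^{N}\bigr)^{1/N}$ are a priori only fractional powers, so one cannot naively cancel the exponent $N$ against the root. Here I would use the fact, already recorded in the text, that $f_a$, $f_b$, $f_\infty$ are genuine single-valued holomorphic functions on $\HH^2$. Consequently the meromorphic function $h_a:=f_a^p/(\theta f_\infty)$ is well defined, and raising it to the $N$-th power gives $h_a^N=\bigl(f_a^N\bigr)^p/\bigl(\theta^N f_\infty^N\bigr)=\theta^N/\theta^N\equiv 1$; since $\HH^2$ is connected, $h_a$ takes values in the finite set of $N$-th roots of unity and is therefore a nonzero constant $\zeta_a$, whence $f_a^p=\zeta_a\,\theta f_\infty$. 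The same argument produces a nonzero constant $\zeta_b$ with $f_b^q=\zeta_b\,(\theta-1)f_\infty$. Substituting into $\theta f_\infty-(\theta-1)f_\infty=f_\infty$ yields $f_\infty=\zeta_a^{-1}f_a^p-\zeta_b^{-1}f_b^q$, so the claim holds with the nonzero constants $c_a=\zeta_a^{-1}$ and $c_b=-\zeta_b^{-1}$.
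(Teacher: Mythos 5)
Your proof is correct and takes essentially the same route as the paper: both reduce the identity to the observation that $f_a^p/f_\infty$ and $f_b^q/f_\infty$ are, up to $(pq-p-q)$-th roots of unity, equal to $\theta$ and $\theta-1$, and then invoke $\theta-(\theta-1)=1$. Your $h^{N}\equiv 1$ argument is just a more explicit justification of the "suitable roots of unity" $\varepsilon_1,\varepsilon_2$ that the paper introduces without comment.
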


\begin{proof} For $u,v\in\CC$ we compute
\begin{align*}
uf_a^p + vf_{\infty} & = u\sco{\frac{(\theta')^{pq}}{\theta^p(\theta-1)^{p(q-1)}}}^{\frac{1}{pq-p-q}}
                       + v\sco{\frac{(\theta')^{pq}}{\theta^{(p-1)q}(\theta-1)^{p(q-1)}}}^{\frac{1}{pq-p-q}} \\
                     & = \sco{\frac{(\theta')^{pq}}{\theta^{(p-1)q}(\theta-1)^{q}}}^{\frac{1}{pq-p-q}} \sco{u\varepsilon_1\theta(\theta-1)^{-1} + v\varepsilon_2(\theta-1)^{-1}} \\
                     & = f_b^q \sco{\frac{u\varepsilon_1\theta+v\varepsilon_2}{\theta-1}}
\end{align*}
where $\varepsilon_1$ and $\varepsilon_2$ are suitable roots of unity of order $pq-p-q$. Hence for $u=\varepsilon_1^{-1}$ and $v=-\varepsilon_2^{-1}$ we get $uf_a^p+vf_\infty=f_b^q$ which implies the statement of the lemma.
\end{proof}

\begin{theorem}\label{Theo Gen AutFormsGamma}
The forms $\omega_a$ and $\omega_b$ generate the algebra ${\mc A}_{\wt{\Gamma}}^{*,*}$.
\end{theorem}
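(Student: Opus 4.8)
The plan is to prove a sharper statement: every nonzero $\chi$-automorphic form $\omega=f\,dz^k$ for $\wt\Gamma$ (for any character $\chi$ and degree $k\in\QQ$) is a polynomial in $\omega_a$ and $\omega_b$. Since ${\mc A}_{\wt\Gamma}^{*,*}$ is by definition generated by all automorphic forms, this suffices. Following Milnor and Ogg, the idea is first to strip off the zeros of $\omega$ at the two cone points by dividing by powers of $\omega_a,\omega_b$, thereby reducing to the case $n_a=n_b=0$, and then to recognize such a form as a polynomial in the uniformizing function $\theta$ times a power of $\omega_\infty$.

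I would begin by recording the analytic link between $\theta$ and the generators. A direct computation from the explicit formulas for $f_a,f_b,f_\infty$ gives
$$
\frac{f_a^p}{f_\infty}=\varepsilon_1\,\theta \quad,\quad \frac{f_b^q}{f_\infty}=\varepsilon_2\,(\theta-1)
$$
for suitable $(pq-p-q)$-th roots of unity $\varepsilon_1,\varepsilon_2$, since the exponent bookkeeping yields $\theta^{(p-1)q-p}=\theta^{pq-p-q}$ and $(\theta-1)^{p(q-1)-q}=(\theta-1)^{pq-p-q}$. Equivalently, $\theta$ is a constant multiple of the degree-zero $\wt\Gamma$-invariant $\omega_a^p/\omega_\infty$; this is the analytic form of Proposition \ref{Prop H->Theta}, which identifies $X_\Theta\cong\CC$ with coordinate $\theta$ (extended to $\CC\PP^1\cong\wh{\HH^2}/\Gamma_{p,q}$).

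Next comes the reduction. Because $f_a$ (resp. $f_b$) has a simple zero exactly at $a$ (resp. $b$) and is nonvanishing elsewhere on $\wh{D_1}$, the quotient $\omega'':=\omega/(\omega_a^{n_a}\omega_b^{n_b})$ is again a holomorphic automorphic form, now with no zeros at $a$ or $b$. By Lemma \ref{Lemma chi=chi_o^k/k_o} its degree is $sk_o$ for a non-negative integer $s$ and its character is $\chi_o^s$; so it is enough to treat $\omega''$, as $\omega=\omega_a^{n_a}\omega_b^{n_b}\omega''$. For such a form consider $\omega''/\omega_\infty^s$. Both forms have degree $sk_o$ and character $\chi_o^s$, so this ratio is a genuine $\wt\Gamma$-invariant meromorphic function of degree zero and descends, via Proposition \ref{Prop H->Theta}, to a rational function $R(\theta)$ on $\CC\PP^1$. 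Since $\omega_\infty$ vanishes only at the cusp while $\omega''$ is holomorphic, $R$ has no poles at finite $\theta$, hence $R$ is a polynomial; an order count at the cusp (where $\theta$ has a simple pole, $\omega_\infty$ a simple zero, and $\omega''$ is holomorphic) forces $\deg R\le s$. Writing $R(\theta)=\sum_{i=0}^{s}c_i\theta^i$ and using $\theta=\mathrm{const}\cdot\omega_a^p/\omega_\infty$ gives
$$
\omega''=R(\theta)\,\omega_\infty^s=\sum_{i=0}^{s}c_i'\,\omega_a^{pi}\,\omega_\infty^{s-i} \;,
$$
where each exponent $s-i$ is non-negative. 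Substituting the relation $\omega_\infty=c_a\omega_a^p+c_b\omega_b^q$ of Lemma \ref{lemma f_a^p-f_b^q=f_inf} exhibits $\omega''$, and hence $\omega$, as a polynomial in $\omega_a,\omega_b$.

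The step I expect to be the main obstacle is the claim that $R$ is a polynomial of degree at most $s$: here one must simultaneously invoke holomorphy of $\omega''$ on $\HH^2$ (ruling out finite poles of $R$), the fact that the only zero of $\omega_\infty$ lies at the cusp, and a precise order count through the $q$-expansion at $\infty$. The accompanying bookkeeping is routine but should be verified for consistency: by Lemma \ref{Lemma f_a,f_b,f_infty} one has $\chi_a^p=\chi_o$, so each monomial $\omega_a^{pi}\omega_\infty^{s-i}$ indeed carries character $\chi_o^s$, and the degrees of all such monomials equal $sk_o$, matching $\omega''$.
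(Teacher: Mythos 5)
Your argument is correct, and it takes a genuinely different route from the paper's. The paper also strips the zero at $a$ by dividing by $\omega_a^{n_a}$ (its Case 2), but the heart of its proof is an iterative descent: in Case 1 it checks by evaluating characters at $x$ that $\chi=\chi_b^{s}$, subtracts a multiple of $\omega_b^{s}$ to force vanishing at the cusp, divides by $\omega_\infty$, and recurses until the degree drops to zero, so that $f$ emerges as a polynomial in $f_b$ and $f_\infty$. You instead strip both cone-point zeros at once and make a single global move: $\omega''/\omega_\infty^{s}$ is a degree-zero $\wt\Gamma$-invariant, hence a rational function of the uniformizer $\theta$ on $\CC\PP^1$, and holomorphy of $\omega''$ together with the order count at the cusp pins it down as a polynomial of degree at most $s$. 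What your route buys is a cleaner structural statement --- the forms with $n_a=n_b=0$ are exactly the $\CC[\theta]$-combinations $\sum c_i'\,\omega_a^{pi}\omega_\infty^{s-i}$ --- and it avoids the character-matching computation and the induction of the paper's Case 1; the price is reliance on the identities $f_a^p/f_\infty=\varepsilon_1\theta$ and $f_b^q/f_\infty=\varepsilon_2(\theta-1)$ (which you verify, and which are implicit in the computation proving Lemma \ref{lemma f_a^p-f_b^q=f_inf}) and on the identification of $\wh{\HH^2}/\Gamma_{p,q}$ with $\CC\PP^1$ via $\theta$ from Proposition \ref{Prop H->Theta}. Both proofs ultimately rest on the same inputs: the valence formula of Lemma \ref{Lemma N+...=k/k_o}, the character formulas of Lemmas \ref{Lemma chi(a) and chi(b)}--\ref{Lemma chi=chi_o^k/k_o}, and the relation $\omega_\infty=c_a\omega_a^p+c_b\omega_b^q$.
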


\begin{proof}
Let $\omega(z,dz)=f(z)dz^k\in{\mc A}_{\wt{\Gamma}}^{k,\chi}$ be a nonconstant automorphic form. In view of Lemma \ref{lemma f_a^p-f_b^q=f_inf}, it is sufficient to express $f$ as a polynomial in $f_a$, $f_b$, $f_\infty$. We shall use the notation from Lemma \ref{Lemma N+...=k/k_o} for the orders of vanishing of $f$. According to corollary \ref{Coro k(pq-p-q) integer}, the number $m=k(pq-p-q)$ is an integer, which is divisible by $p$ if and only if $n_a$ is. We shall consider two cases.

{\it Case 1}: Suppose that $p$ divides $m$, say $m=ps$. Then $n_1=\frac{n_a}{p}$ is also an integer. Then the form $\omega_b^s$ is a $k$-form. Moreover, we have $\chi=\chi_b^s$. To see this we just evaluate the two characters at $x$ using Lemma \ref{Lemma chi(a) and chi(b)}:
\begin{align*}
\chi(x) & = e^{2\pi i \frac{m}{pq(pq-p-q)}}e^{2\pi i \frac{qq_1n_a+pp_1n_b}{pq}} = e^{2\pi i \frac{s}{q(pq-p-q)}}e^{2\pi i \frac{qq_1pn_1+pp_1n_b}{pq}} \\ & = e^{2\pi i \frac{s}{q(pq-p-q)}}e^{2\pi i \frac{p_1n_b}{q}} \;,\\
\chi_b^{s}(x) &= e^{2\pi i \frac{s}{q(pq-p-q)}}e^{2\pi i \frac{p_1s}{q}} \;.
\end{align*}
Thus $\chi=\chi_b^{s}$ is equivalent to $e^{2\pi i \frac{p_1n_b}{q}} = e^{2\pi i \frac{p_1s}{q}}$, which in turn holds if and only if $q$ divides $s-n_b$. The identity proven in Lemma \ref{Lemma N+...=k/k_o}, properly rewritten according to the present assumptions, becomes
$$
ps = pq N + pq n_\infty + q pn_1 + p n_b \;.
$$
The above equality implies that $q$ divides $s-n_b$. Hence $\chi=\chi_b^{s}$. We can conclude that both $\omega$ and $\omega_b^{s}$ belong to the vector space ${\mc A}_{\wt{\Gamma}}^{k,\chi}$. Hence any linear combination of these two forms belongs to ${\mc A}_{\wt{\Gamma}}^{k,\chi}$. There exists $c_1\in\CC$ such that the form $f-c_1f_b$ vanishes at $\infty$ and can be written as
$$
f-c_1f_b^s=f_{\infty}f_1 \quad,\quad f_1(z)dz^{k_1} \in {\mc A}_{\wt{\Gamma}}^{k_1,\chi_1} \;,
$$
where $\chi_1=\chi\chi_o^{-1}$ and $k_1=k-k_o=\frac{p(s-q)}{pq-p-q}$. Put $m_1=p(s-q)$. Now notice the following: the fact that ${\mc A}_{\wt{\Gamma}}^{k,\chi}$ contains cusp forms implies, via Lemma \ref{Lemma N+...=k/k_o}, that $m\geq pq$. Thus $s\geq q$. There are two possible outcomes, depending on whether $s=q$ or $s>q$. First, if $s=q$, then $k_1=0$, so that $f_1$ is a holomorphic function on $\wh{\HH^2/\Gamma_{p,q}}$ and hence constant. In this case $f=c_1f_b^s+f_1f_{\infty}$, and we are done. Second, if $s>q$, then $p$ divides $m_1$ and we can restart the procedure. Eventually $f$ is expressed as a polynomial in $f_b$ and $f_\infty$.

{\it Case 2}: Suppose now that $p$ does not divide $m$. Then $n_a> 0$. Consider $f_1=\frac{f}{f_a^{n_a}}$. Put $m_1=m-qn_a$. Then $f_1(z)dz^{\frac{m_1}{pq-p-q}}$ is $\chi\chi_a^{-n_a}$-automorphic. We have $f_1(a)\ne 0$ and hence $p$ divides $m_1$, which brings us to case 1.

This completes the proof of the theorem.
\end{proof}

\begin{corollary}\label{Coro Gen A_G}
Let $G=G_{pq-p-q}$ be the subgroup of $\wt{\Gamma}$ generated by $\alpha^{pq-p-q}$ and $\beta^{pq-p-q}$. Then ${\mc A}_G^*={\mc A}_{\wt{\Gamma}}^{*,*}$, and hence the algebra ${\mc A}_G^*$ is generated by $\omega_a$ and $\omega_b$.
\end{corollary}

\begin{proof} To prove the corollary it suffices to show that $G$ equals the kernel of each of the characters $\chi_a$ and $\chi_b$. According to Lemma \ref{Lemma G1 -> G} the group $G$ is isomorphic to $\wt{\Gamma}$ as an abstract group, and equals the smallest normal subgroup of $\wt{\Gamma}$ containing the element $x^{pq-p-q}$. Also, we have $\wt{P}(G)=\Gamma_{p,q}$. From Lemma \ref{Lemma f_a,f_b,f_infty} we know that $\chi_a^p=\chi_b^q=\chi_o$. We also know that $\chi_o(x)=e^{2\pi i\frac{1}{pq-p-q}}$ so that $\chi_o$ has order $pq-p-q$. Let $s$ be the order of $\chi_a$. Then $1=\chi_a^{ps}=\chi_o^s$, whence $pq-p-q$ divides $s$. On the other hand, we have
\begin{gather*}
\chi_a(x)=e^{2\pi i \frac{1}{p(pq-p-q)}}e^{2\pi i \frac{q_1}{p}} = e^{2\pi i \frac{1+pqq_1-pq_1-qq_1}{p(pq-p-q)}} = e^{2\pi i \frac{pp_1+qq_1+pqq_1-pq_1-qq_1}{p(pq-p-q)}} \\
= e^{2\pi i \frac{pp_1+pqq_1-pq_1}{p(pq-p-q)}} = e^{2\pi i \frac{p_1+qq_1-q_1}{pq-p-q}} \;.
\end{gather*}
Hence $\chi_a^{pq-p-q}=1$ and so $s$ divides $pq-p-q$. Thus $s=pq-p-q$. An analogous argument shows that the order of $\chi_b$ equals $pq-p-q$. This completes the proof.
\end{proof}

\section{The universal covering $\wt{SL_2}(\RR)\lw {\mathbb S}^3\setminus K_{p,q}$}\label{Sect SL/G = S-K}

In this section we show that $\wt{SL_2}/G$ is diffeomorphic to the complement ${\mathbb S}^3\setminus K_{p,q}$ of a torus knot in the 3-sphere, where $G$ is the group defined in corollary \ref{Coro Gen A_G}. Consider the map
\begin{gather*}
\begin{array}{cccc}
\Psi: & \HH^2\times\wt{\CC^\times} & \lw & \CC^2 \\
      & (z,w) & \lo & (\omega_b(z,w),\omega_a(z,w))
\end{array} \;.
\end{gather*}
Corollary \ref{Coro Gen A_G} asserts that the map $\Psi$ factors through the projection $P_G:\HH^2\times\wt{\CC^\times} \lw (\HH^2\times\wt{\CC^\times})/G$. Put $M=(\HH^2\times\wt{\CC^\times})/G$ and denote the resulting map by
\begin{gather*}
\ol{\Psi}: M \lw \CC^2 \;.
\end{gather*}
Now recall that we have $\omega_\infty=c_b\omega_b^q+c_a\omega_a^p$. The cusp form $\omega_\infty$ does not vanish on $\HH^2\times\wt{\CC^\times}$. It follows that the image of $\ol{\Psi}$ sits in the complement of the curve $V\subset\CC^2$ defined by the equation $c_bz_1^q+c_az_2^p=0$.

\begin{theorem}\label{Theo HxC/G = C^2-V}
The map $\ol{\Psi}:M\lw\CC^2\setminus V$ is biholomorphic and $\wt{\CC^\times}$-equivariant.
\end{theorem}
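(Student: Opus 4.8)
The plan is to reduce the assertion to a bijectivity check, using the classical fact that a holomorphic bijection between complex manifolds of the same dimension is automatically biholomorphic. Both spaces are connected complex surfaces: $M$ is smooth because $G$ is torsion free and acts freely and properly discontinuously on $\HH^2\times\wt{\CC^\times}$, and $\CC^2\setminus V$ is open in $\CC^2$; the map $\ol{\Psi}$ is holomorphic by construction. The organizing principle is a pair of commuting symmetries. On the source, $\wt{\CC^\times}$ acts by $\mu\cdot(z,w)=(z,\mu w)$, which commutes with $G$ and descends to $M$; on the target, $\CC^\times$ acts with weights $(p,q)$, i.e.\ $\lambda\cdot(z_1,z_2)=(\lambda^pz_1,\lambda^qz_2)$, leaving $V$ invariant. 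Writing $\lambda$ for the projection of $\mu^{1/(pq-p-q)}$ to $\CC^\times$, the transformation laws $\omega_b(z,\mu w)=\mu^{p/(pq-p-q)}\omega_b(z,w)$ and $\omega_a(z,\mu w)=\mu^{q/(pq-p-q)}\omega_a(z,w)$ show that $\ol{\Psi}$ is equivariant: $\ol{\Psi}(\mu\cdot m)=\lambda\cdot\ol{\Psi}(m)$.

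First I would descend to the base orbifolds. Quotienting by the two actions gives $M/\wt{\CC^\times}\cong\HH^2/\Gamma_{p,q}=\Theta$ (using $\wt P(G)=\Gamma_{p,q}$) and $(\CC^2\setminus V)/\CC^\times\cong\PP^1(p,q)\setminus\{[V]\}$, where $V\setminus\{{\bf o}\}$ is a single $\CC^\times$-orbit because $\gcd(p,q)=1$, hence one point $[V]$. The invariant coordinate $u=z_1^q/z_2^p$ pulls back under $\ol{\Psi}$ to $\omega_b^q/\omega_a^p$, a form of degree $0$ with trivial character (since $\chi_b^q=\chi_a^p=\chi_o$), hence a genuine $\Gamma_{p,q}$-invariant function of $\theta$. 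A direct computation with the explicit formulae for $f_a,f_b$ gives
$$ \frac{\omega_b^q}{\omega_a^p}=\zeta\,\frac{\theta-1}{\theta},\qquad \zeta^{\,pq-p-q}=1. $$
This is a M\"obius function of $\theta$, so the induced map $\Theta\to\PP^1(p,q)\setminus\{[V]\}$ is a biholomorphism of the underlying Riemann surfaces; it sends the cusp $\theta=\infty$ to the removed point $[V]$ (consistent with $\omega_\infty\neq0$, i.e.\ $\zeta=-c_a/c_b$), the cone point $a$ at $\theta=0$ to $\{z_2=0\}$ of index $p$, and $b$ at $\theta=1$ to $\{z_1=0\}$ of index $q$. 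Thus the base map is an isomorphism of orbifolds.

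Then I would analyze the fibers, which is the crux and the place where the index $r=pq-p-q$ is forced. Over a regular point of $\Theta$ with representative $z_0\in\HH^2$, the stabilizer of $z_0$ in $G$ is $G\cap C=\langle c^{pq-p-q}\rangle$, so the source fiber is $\wt{\CC^\times}/\langle c^{pq-p-q}\rangle$, and along it $\ol{\Psi}$ acts by $w\mapsto\lambda\cdot(f_b(z_0),f_a(z_0))$. The homomorphism $w\mapsto\lambda$ has kernel exactly $\langle c^{pq-p-q}\rangle$, so $\ol{\Psi}$ maps this fiber bijectively onto the free $\CC^\times$-orbit. At the cone point $a$ the stabilizer of $a$ in $G$ is $\langle\alpha^{pq-p-q}\rangle$, since $\chi_a(\alpha)$ has order exactly $pq-p-q$ (here $\gcd(q-1,pq-p-q)=1$ is used); because $\omega_a$ vanishes at $a$, the fiber map reduces to $w\mapsto z_1=f_b(a)\lambda^p$, and $\langle\alpha^{pq-p-q}\rangle$ is precisely the kernel of $w\mapsto\lambda^p$, again a bijection onto $\{z_2=0\}\setminus\{{\bf o}\}$; the vertex $b$ is symmetric. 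Combining the base bijection with the fiberwise bijections through the equivariance yields that $\ol{\Psi}$ is a bijection of $M$ onto $\CC^2\setminus V$, and with holomorphy and equal dimension this completes the proof.

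The main obstacle is the bookkeeping inside the central extension: one must verify that the stabilizer in $G$ of each point of $\HH^2$ coincides \emph{exactly} with the kernel of the corresponding fiber homomorphism $\wt{\CC^\times}\to\CC^\times$. This is where the choice $G=G_{pq-p-q}=\ker\chi_a=\ker\chi_b$ enters decisively; for any other $r$ the fiber map would be a nontrivial covering rather than a bijection. The two singular fibers demand the separate order computations for $\chi_a(\alpha)$ and $\chi_b(\beta)$, and one must also confirm that the image meets $\CC^2\setminus V$ surjectively while avoiding $V$ precisely, the latter being exactly the nonvanishing of $\omega_\infty=c_b\omega_b^q+c_a\omega_a^p$.
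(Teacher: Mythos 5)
Your proof is correct, but it takes a genuinely different route from the paper's. The paper splits the bijectivity check into injectivity and surjectivity: for injectivity it invokes the fact that $\omega_a,\omega_b$ generate ${\mc A}_G^*$ and then, for two non-congruent points, produces a separating automorphic form as a linear combination $c_1f_a^p+c_2f_b^q$ of degree $k_o$ with a single prescribed zero (the zero count coming from lemma \ref{Lemma N+...=k/k_o}), handling points on the same fibre by a direct computation with the central element $c^{r}$; for surjectivity it sweeps out each $\CC^\times$-orbit closure $\{c_1z_1^q+c_2z_2^p=0\}$ with $(c_1,c_2)$ not proportional to $(c_b,c_a)$ by locating a zero of $c_1f_b^q+c_2f_a^p$ in $\HH^2$. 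You instead fibre both sides over their orbit spaces and check bijectivity on the base and on each fibre. Your explicit identity $\omega_b^q/\omega_a^p=\zeta(\theta-1)/\theta$ is the transparent form of the paper's zero-counting argument, and your stabilizer-versus-kernel matching at the regular and cone fibres isolates exactly where the index $r=pq-p-q$ (equivalently $G=\ker\chi_a=\ker\chi_b$) is forced, including the small but necessary check $\gcd(q-1,pq-p-q)=1$, which the paper's route never has to make explicit. The trade-off is that your argument needs a little more care with the orbifold quotients (one must verify that the fibre of $M$ over a point of $\Theta$ is a single $\wt{\CC^\times}$-orbit with exactly the stated stabilizer, which your own closing remarks correctly identify as the crux), whereas the paper avoids any fibre analysis at the cost of leaning on the full generation theorem \ref{Theo Gen AutFormsGamma}. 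Both proofs reduce biholomorphy to bijectivity by the same classical fact.
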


\begin{proof}

We shall prove that $\ol{\Psi}$ is a bijection. Since $\omega_a$ and $\omega_b$ are holomorphic functions on $M$, this will be sufficient to conclude that $\ol{\Psi}$ is biholomorphic (see \cite{BochnerMartin}, p. 179). Put $r=pq-p-q$.

First we prove the injectivity of $\ol{\Psi}$. Recall that $\omega_a$ and $\omega_b$ generate ${\mc A}_G^*$. Hence if $\Psi(z,w)=\Psi(z_0,w_0)$, then $\omega(z,w)=\omega(z_0,w_0)$ for all $\omega\in{\mc A}_G^*$. To prove injectivity it is sufficient to show that for any two points $(z,w),(z_0,w_0)\in\HH^2\times\wt{\CC^\times}$ which are not congruent under the action of $G$, there exists a $G$-automorphic form  $\omega$ such that $\omega(z_0,w_0)\ne\omega(z,w)$. We can assume that both $z$ and $z_0$ belong to the fundamental domain $D_1$ of $\Gamma_{p,q}$. Recall that $\omega_a^p$ and $\omega_b^q$ belong to ${\mc A}_G^{k_o}$. Also, $f_a^p$ vanishes at the vertex $a$ with multiplicity $p$ and has no other zeros in $D_1$. Similarly, $f_b^q$ vanishes at $b$ and nowhere else in $\ol{D_1}$. Thus there exists a unique (up to a scalar multiple) linear combination $f=c_1f_b^q+c_2f_a^p$ which vanishes at $z_0$. This $f$ does not vanish at any point in $\ol{D_1}$ not congruent to $z_0$. Now the point $z$ is either 
congruent to $z_0$, or not. If $z$ is not congruent to $z_0$, then $\omega(z,dz)=f(z)dz^{k_o}$ vanishes at $(z_0,w_0)$ but not at $(z,w)$. If $z$ is congruent to $z_0$, we can assume that $z=z_0$ and that $\omega(z,dz)$ does not vanish at $(z_0,w_0)$ nor at $(z_0,w)$. However, an elementary direct computation shows that, if $\Psi(z_0,w_0)=\Psi(z_0,w)$, then $w_0=w(c'(z_0))^{jr}$ for some integer $j$, so that $(z_0,w_0)$ and $(z_0,w)$ are congruent under (the center of) $G$.

Now we prove the surjectivity of $\ol{\Psi}$. Start by noticing that both $\HH^2\times\wt{\CC^\times}$ and $\CC^2$ carry $\wt{\CC^\times}$ actions with respect to which $\Psi$ is equivariant. These actions are given by
\begin{gather*}
\begin{array}{rcl}
\wt{\CC^\times}\times(\HH^2\times\wt{\CC^\times}) & \lw & \HH^2\times\wt{\CC^\times} \\
 \lambda\cdot(z,w) & = & (z,\lambda w)
\end{array} \quad,\quad
\begin{array}{rcl}
\wt{\CC^\times}\times\CC^2 & \lw & \CC^2 \\
 \lambda\cdot(z_1,z_2) & = & (\lambda^{\frac{p}{r}} z_1,\lambda^{\frac{q}{r}} z_2)
\end{array} \;.
\end{gather*}
(Recall that $\wt{\CC^{\times}}$ acts on $\CC$ via the projection to $\CC^\times$.) The second action was already introduced in section \ref{Sect Knots}. Let us recall and re-examine the orbit structure. It is easy to see that each orbit closure has the form
$$
\{(z_1,z_2)\in\CC^2 : c_1z_1^q+c_2z_2^p=0\}
$$
for some suitable pair of complex numbers $(c_1,c_2)$. Conversely, each pair of complex numbers $(c_1,c_2)$ corresponds to an orbit closure. We want to show that all orbits, except the ones corresponding to pairs proportional to $(c_b,c_a)$, are contained in the image of $\Psi$. Let $(c_1,c_2)$ be a pair not proportional to $(c_b,c_a)$. Set
$$
\omega = c_1\omega_b^q + c_2\omega_a^p \;.
$$
Then $\omega$ is a form in ${\mc A}_G^{k_o}$ and we have $\omega(z,dz)=f(z)dz^{k_o}$ with $f=c_1f_b^q+c_2f_a^p$. The choice of $c_1,c_2$ implies that $f$ is not a scalar multiple of $f_\infty$. Hence $f$ vanishes somewhere inside $\HH^2$, say $f(z_0)=0$. Then $\omega(z_0,dz)=0$ and we have
$$
\Psi(\{z_0\}\times\wt{\CC^\times}) = \{(z_1,z_2)\in\CC^2\setminus(0,0) : c_1z_1^q+c_2z_2^p=0\} \;.
$$
Thus the image of $\Psi$ equals $\CC^2\setminus V$ which implies the surjectivity of $\ol{\Psi}$. The $\wt{\CC^\times}$-equivariance $\ol{\Psi}$ follows from the $\wt{\CC^\times}$-equivariance of $\Psi$ and the fact that on $\HH^2\times\wt{\CC^\times}$ the $\wt{\CC^\times}$-action and the $G$-action commute.
\end{proof}

\begin{corollary}\label{Coro S-K = wt(SL)/G}
The coset space $\wt{SL_2}(\RR)/G$ is diffeomorphic to the complement ${\mathbb S}^3\setminus K$ of a $(p,q)$-torus knot in the 3-sphere.
\end{corollary}

\begin{proof}
Recall that we have identified $\wt{SL_2}(\RR)$ with its orbit in $\HH^2\times\wt{\CC^\times}$ through the point $(i,1)$. Restrict the two $\wt{\CC^\times}$ actions from the above proof to $\RR_+$ actions, i.e. consider only real positive $\lambda$. Then each $\RR_+$ orbit in $\HH^2\times\wt{\CC^\times}$ intersects each orbit of $\wt{SL_2}(\RR)$ at exactly one point. On the other hand, each $\RR_+$ orbit in $\CC^2$ intersects the three sphere ${\mathbb S}^3=\{\abs{z_1}^2+\abs{z_2}^2=1\}$ at exactly one point. The curve $V$ and its complement $\CC^2\setminus V$ are invariant under the $\RR_+$ action. The intersection $K={\mathbb S}^3\cap V$ is a $(p,q)$-torus knot. Thus each $\RR_+$ orbit in $\CC^2\setminus V$ intersects both $\Psi(\wt{SL_2}(\RR))$ and ${\mathbb S}^3\setminus K$, each at a single point.

Define a map $\rho:\Psi(\wt{SL_2}(\RR))\lw {\mathbb S}^3\setminus K$ sending a point of $\Psi(\wt{SL_2}(\RR))$ to the unique point in ${\mathbb S}^3\setminus K$ which belongs to the same $\RR_+$ orbit as $(z_1,z_2)$. The $\RR_+$ orbit through a point $(z_1,z_2)\in\Psi(\wt{SL_2}(\RR))$ has the form $\{(\lambda^{\frac{p}{r}}z_1,\lambda^{\frac{q}{r}}z_2):\lambda\in\RR_+\}$. The function $F(\lambda,z_1,z_2)=\norm{(\lambda^{\frac{p}{r}}z_1,\lambda^{\frac{q}{r}}z_2)}^2$ takes all positive values, and has nonvanishing partial derivative $\frac{\pl F}{\pl\lambda}$ at all points. The implicit function theorem implies that the value of $\lambda$ for which $F(\lambda,z_1,z_2)=1$ depends smoothly on $(z_1,z_2)$. We can conclude that the map $\rho$ is a diffeomorphism.
\end{proof}

\begin{remark} The above corollary shows that the torus knot complement is a homogeneous manifold, i.e. admits a transitive group action. On the other hand, $\wt{SL_2}(\RR)$ with its left invariant metric is a model for one of the eight 3-dimensional geometries, as discussed in section \ref{Sect_GeoSrt} and remark \ref{Zab SL_2 Geometry}. This metric is not right invariant. The coset $\wt{SL_2}(\RR)/G$ is obtained by letting $G$ act on $\wt{SL_2}(\RR)$ on the left. Thus the coset, and consequently ${\mathbb S}^3\setminus K$, inherits a locally homogeneous metric. However, the latter metric is not homogeneous, i.e. ${\mathbb S}^3\setminus K$ does not admit a transitive isometry group. There is only a 1-parameter isometric action on the torus knot complement, induced by the right action of the subgroup $\wt{SO_2}(\RR)$ on $\wt{SL_2}(\RR)$, and providing the Seifert fibration structure.
\end{remark}

\section{A knot in a lens space}\label{Sect A Lens Space}

We saw, in corollary \ref{Coro S-K = wt(SL)/G}, that the complement ${\mathbb S}^3\setminus K_{p,q}$ of a $(p,q)$-torus knot in the 3-sphere is diffeomorphic to the coset space $\wt{SL_2}(\RR)/G$ where $G$ is a certain subgroup of the preimage $\wt{\Gamma}=\wt{P}^{-1}(\Gamma_{p,q})$ of a $(p,q,\infty)$-triangle group $\Gamma_{p,q}\subset PSL_2(\RR)$. The index of $G$ in $\wt{\Gamma}$ is $r=pq-p-q$. The only case when $G=\wt{\Gamma}$ is $p=2$, $q=3$, which gives $\Gamma_{p,q}=PSL_2(\ZZ)$. In this case we get ${\mathbb S}^3\setminus K_{2,3} \cong PSL_2(\RR)/PSL_2(\ZZ)$. It is natural to ask whether the coset space $\wt{SL_2}(\RR)/\wt{\Gamma}=PSL_2(\RR)/\Gamma_{p,q}$ can be identified with some other space, perhaps related to the knot complement ${\mathbb S}^3\setminus K_{p,q}$. The answer is not hard to find, and we present it in this section.

In Theorem \ref{Theo HxC/G = C^2-V} and its proof we have constructed a $\wt{\CC^\times}$-equivariant biholomorphic map $\ol{\Psi}:(\HH^2\times\wt{\CC^\times})/G \lw \CC^2\setminus V$. Recall that the action of the center $C$ of $\wt{SL_2}(\RR)$ on $\HH^2\times\wt{\CC^\times}$ coincides with the action of the subgroup of $\wt{\CC^{\times}}$ which is the preimage of $1\in\CC^\times$ under the universal covering map. Now observe that together $G$ and $C$ generate exactly $\wt{\Gamma}$, so that
$$
((\HH^2\times\wt{\CC^\times})/G)/C = (\HH^2\times\wt{\CC^\times})/\wt{\Gamma} \;.
$$
Moreover, since $C\cap G = <c^r>$, the action of $C$ on $(\HH^2\times\wt{\CC^\times})/G$ reduces to a cyclic action of order $r$. From the construction in the proof of Theorem \ref{Theo HxC/G = C^2-V} it follows that this cyclic action is transmitted to an action on $\CC^2\setminus V$ generated by the linear transformation
\begin{gather*}
h_{\frac{1}{r}}=\begin{pmatrix} e^{2\pi i \frac{p}{r}} & 0 \\
                                0 & e^{2\pi i \frac{q}{r}}
                \end{pmatrix} \;.
\end{gather*}
Clearly the latter action (as well as the $\wt{\CC^\times}$-action) extends from $\CC^2\setminus V$ to $\CC^2$, the curve $V$ being an orbit closure. The action of $<h_{\frac{1}{r}}>$ on $\CC^2\setminus {\bf o}$ is free. The quotient $\Sigma=\CC^2/<h_{\frac{1}{r}}>$ is a singular complex surface. Let $W\subset \Sigma$ denote the image of $V$, which is a singular complex curve in $\Sigma$. Clearly
$$
(\CC^{2}\setminus V)/<h_{\frac{1}{r}}> = \Sigma\setminus W \;.
$$
Thus we have a $\wt{\CC^\times}$-equivariant biholomorphic map between $(\HH^2\times\wt{\CC^\times})/\wt{\Gamma}$ and $\Sigma\setminus W$. Now recall that we have identified $\HH^2\times\wt{\CC^\times}$ with the universal cover of the bundle $T'\HH^2$ of nonzero tangent vectors to $\HH^2$. Since $\wt{\Gamma}=\wt{P}^{-1}(\Gamma_{p,q})$, we have $(\HH^2\times\wt{\CC^\times})/\wt{\Gamma}\cong T'\HH^2/\Gamma_{p,q}$. To summarize, we have obtained the following.

\begin{theorem}\label{Theo HxC/Gamma = Sigma-W}
The quotient $T'\HH^2/\Gamma_{p,q}$ is $\CC^{\times}$-equivariantly biholomorphic to the complex surface $\Sigma\setminus W$.
\end{theorem}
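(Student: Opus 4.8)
The plan is to produce the biholomorphism by descending the map $\ol{\Psi}$ of Theorem \ref{Theo HxC/G = C^2-V} through a single further quotient by the residual action of the center. Recall that $\ol{\Psi}: M=(\HH^2\times\wt{\CC^\times})/G \lw \CC^2\setminus V$ is a $\wt{\CC^\times}$-equivariant biholomorphism. First I would make the residual action precise: since $G$ and the center $C$ together generate $\wt{\Gamma}$ while $C\cap G=<c^r>$, the group $C$ acts on $M$ through the finite cyclic quotient $C/<c^r>\cong\ZZ_r$, and the orbit space of this $\ZZ_r$-action is $M/\ZZ_r=(\HH^2\times\wt{\CC^\times})/\wt{\Gamma}$.

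The central step is to transport this $\ZZ_r$-action across $\ol{\Psi}$. Because $\ol{\Psi}$ is equivariant for the two $\wt{\CC^\times}$-actions and $C$ is identified with the kernel of the covering $\wt{\CC^\times}\lw\CC^\times$, the generator $c$ is carried to the element $\tilde c\in\wt{\CC^\times}$ lying over $1\in\CC^\times$. Evaluated in the weighted action $\lambda\cdot(z_1,z_2)=(\lambda^{\frac{p}{r}}z_1,\lambda^{\frac{q}{r}}z_2)$, this element sends $(z_1,z_2)$ to $(e^{2\pi i\frac{p}{r}}z_1,e^{2\pi i\frac{q}{r}}z_2)$, which is precisely $h_{\frac{1}{r}}(z_1,z_2)$. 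Hence $\ol{\Psi}$ intertwines the $\ZZ_r$-action on $M$ with the action of $<h_{\frac{1}{r}}>$ on $\CC^2\setminus V$.

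To finish, I would verify that both $\ZZ_r$-actions are free and holomorphic, so that the quotients are complex manifolds and the descended holomorphic bijection is automatically biholomorphic. On the source, $\wt{\Gamma}$ acts on $\HH^2\times\wt{\CC^\times}$ with trivial stabilizers, whence $C$ acts freely on $M$; on the target, $<h_{\frac{1}{r}}>$ acts freely on $\CC^2\setminus{\bf o}$ and hence on the invariant set $\CC^2\setminus V$. Thus $\ol{\Psi}$ descends to a biholomorphism $(\HH^2\times\wt{\CC^\times})/\wt{\Gamma}\lw(\CC^2\setminus V)/<h_{\frac{1}{r}}>=\Sigma\setminus W$. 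Rewriting the source via the facts that $\HH^2\times\wt{\CC^\times}$ is the universal cover of $T'\HH^2$ with deck group $C$ and that $\wt{\Gamma}=\wt{P}^{-1}(\Gamma_{p,q})$ gives $(\HH^2\times\wt{\CC^\times})/\wt{\Gamma}\cong T'\HH^2/\Gamma_{p,q}$, and the $\wt{\CC^\times}$-equivariance descends to $\CC^\times$-equivariance since $\wt{\CC^\times}/C\cong\CC^\times$.

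The only delicate point, and the place where an error would most easily creep in, is the central computation identifying the generator $c$ with $h_{\frac{1}{r}}$: one must correctly match the lift of the derivative cocycle defining the $C$-action on $\HH^2\times\wt{\CC^\times}$ against the weights $(p,q)$ of the linear $\CC^\times$-action on $\CC^2$, confirming that $c$ becomes exactly the rotation $h_{\frac{1}{r}}$ rather than another root-of-unity transformation. Everything else — the descent of the map and of the equivariance — is then formal.
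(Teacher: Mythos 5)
Your proposal follows essentially the same route as the paper: it descends $\ol{\Psi}$ through the residual $\ZZ_r$-action of the center $C$ on $M=(\HH^2\times\wt{\CC^\times})/G$, uses the $\wt{\CC^\times}$-equivariance to identify the transported action on $\CC^2\setminus V$ as the one generated by $h_{\frac{1}{r}}$, and then rewrites the two quotients as $T'\HH^2/\Gamma_{p,q}$ and $\Sigma\setminus W$. The computation you flag as delicate --- that $c$, viewed as the generator of the preimage of $1\in\CC^\times$ in $\wt{\CC^\times}$, acts through the weights $(\frac{p}{r},\frac{q}{r})$ exactly as $h_{\frac{1}{r}}$ --- comes out correctly and matches the paper.
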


Now let us turn our attention to the 3-manifolds. Recall that $PSL_2(\RR)$ acts simply transitively on the unit tangent bundle $U\HH^2$. Hence $PSL_2(\RR)/\Gamma_{p,q}\cong U\HH^2/\Gamma_{p,q}\subset T'\HH^2/\Gamma_{p,q}$. The above theorem implies that $PSL_2(\RR)/\Gamma_{p,q}$ embeds in $\Sigma\setminus W$ in a way that it intersects each $\RR_+$ orbit exactly once. On the other hand, consider the image of ${\mathbb S}^3\setminus K$ in $\Sigma\setminus W$. Let ${\mc L}$ and ${\mc K}$ denote respectively the images of ${\mathbb S}^3$ and $K$ in $\Sigma$. Notice that ${\mc L}$ is a lens space: the quotient of ${\mathbb S}^3$ under the free action of the cyclic group generated by $h_{\frac{1}{r}}$. In the notation of \cite{Scott} we have ${\mc L}=L(r,p(q_1-p_1+pp_1))$. Clearly ${\mc K}={\mc L}\cap W$ and ${\mc L}\setminus {\mc K}$ is the image of ${\mathbb S}^3\setminus K$ in $\Sigma$. Observe that ${\mc L}\setminus {\mc K}$ intersects each $\RR_+$ orbit in $\Sigma\setminus W$ exactly once. Thus we can use 
the $\RR_+$ action to obtain a diffeomorphism between ${\mc L}\setminus {\mc K}$ and the image of $PSL_2(\RR)/\Gamma_{p,q}$, as in the proof of corollary \ref{Coro S-K = wt(SL)/G}. Let us record this fact.

\begin{corollary}\label{Coro L-K=PSL/Gamma}
The coset space $PSL_2(\RR)/\Gamma_{p,q}$ is diffeomorphic to the knot complement in a lens space ${\mc L}\setminus {\mc K}$.
\end{corollary}

\noindent{\bf Acknowledgement:} I am grateful to my father, Vasil Tsanov, for supporting me by all means throughout the years. The opportunity to learn from him and discuss mathematics with him is essential in many ways, but it was especially so at the early stages of this project. The project was initiated a few years ago and went through several stages before the final redaction and publication. During this time I have been supported by the Institute of Mathematics and Informatics at the Bulgarian Academy of Sciences, and Queen's University. The final revision was done at Ruhr-Universit\"at Bochum, with the support of the SFB/TR12 grant. I have benefitted from the support of and conversations with Petko Nikolov, Georgi Ganchev, Ivan Dimitrov, Ram Murty, Noriko Yui, to whom I thank sincerely.

\vspace{0.5cm}

\noindent\textsc{\small Fakult\"at f\"ur Mathematik, Ruhr-Universit\"at Bochum, Raum NA 4/76, Bochum 44780, Deutschland.}\\
{\small {\it Email}: valdemar.tsanov@gmail.com}

\end{document}